\documentclass[11pt,oneside]{amsart}

\usepackage{bm}

\usepackage{fix-cm}
\DeclareMathAlphabet{\mathcal}{OMS}{cmsy}{m}{n}

\usepackage{tikz-cd}

\usepackage{tcolorbox}

\usepackage[arrow,curve,matrix,tips,2cell]{xy}

\usepackage{fancyhdr}

\usepackage{anyfontsize}
\usepackage{mathrsfs}

\usepackage{accents}

\usepackage{bbm}

\usepackage{geometry}
\usepackage{amsmath}
\usepackage{amscd}
\usepackage{amssymb}
\usepackage{latexsym}
\usepackage{url}

\usepackage{enumitem}

\usepackage{graphicx}

\newtheorem{theorem}{Theorem}[section]
\newtheorem*{theorem*}{Theorem}
\newtheorem{lemma}[theorem]{Lemma}
\newtheorem*{lemma*}{Lemma}
\newtheorem{corollary}[theorem]{Corollary}
\newtheorem{proposition}[theorem]{Proposition}

\theoremstyle{definition}
\newtheorem{definition}[theorem]{Definition}
\theoremstyle{remark}
\newtheorem{remark}[theorem]{Remark}
\newtheorem*{definition*}{Definition}

\newtheorem{question}[theorem]{Question}
\newtheorem*{question*}{Question}

\theoremstyle{definition}
\newtheorem{example}[theorem]{Example}
\newtheorem{examples}[theorem]{Examples}

\makeatletter
\def\revddots{\mathinner{\mkern1mu\raise\p@
\vbox{\kern7\p@\hbox{.}}\mkern2mu
\raise4\p@\hbox{.}\mkern2mu\raise7\p@\hbox{.}\mkern1mu}}
\makeatother 
\newcommand{\bgl}{\begin{equation}} 
\newcommand{\egl}{\end{equation}}
\newcommand{\bgloz}{\begin{equation*}} 
\newcommand{\egloz}{\end{equation*}}
\newcommand{\bgln}{\begin{eqnarray}} 
\newcommand{\egln}{\end{eqnarray}}
\newcommand{\bglnoz}{\begin{eqnarray*}} 
\newcommand{\eglnoz}{\end{eqnarray*}}
\newcommand{\btheo}{\begin{theorem}}
\newcommand{\etheo}{\end{theorem}}
\newcommand{\btheooz}{\begin{theorem*}}
\newcommand{\etheooz}{\end{theorem*}}

\newcommand{\blemma}{\begin{lemma}}
\newcommand{\elemma}{\end{lemma}}
\newcommand{\blemmaoz}{\begin{lemma*}}
\newcommand{\elemmaoz}{\end{lemma*}}
\newcommand{\bproof}{\begin{proof}}
\newcommand{\eproof}{\end{proof}}
\newcommand{\bbew}{\begin{beweis}}
\newcommand{\ebew}{\end{beweis}}
\newcommand{\bremark}{\begin{remark}\em}
\newcommand{\eremark}{\end{remark}}
\newcommand{\bdefin}{\begin{definition}}
\newcommand{\edefin}{\end{definition}}
\newcommand{\bdefinoz}{\begin{definition*}}
\newcommand{\edefinoz}{\end{definition*}}
\newcommand{\bex}{\begin{example}}
\newcommand{\eex}{\end{example}}
\newcommand{\bexs}{\begin{examples}}
\newcommand{\eexs}{\end{examples}}

\newcommand{\bprop}{\begin{proposition}}
\newcommand{\eprop}{\end{proposition}}
\newcommand{\bcor}{\begin{corollary}}
\newcommand{\ecor}{\end{corollary}}
\newcommand{\bfa}{\begin{cases}} 
\newcommand{\efa}{\end{cases}}

\newcommand{\bquestion}{\begin{question}}
\newcommand{\equestion}{\end{question}}
\newcommand{\bquestionoz}{\begin{question*}}
\newcommand{\equestionoz}{\end{question*}}
\def\Nz{\mathbb{N}}

\def\1z{\mathbb{1}}
\newcommand{\onto}{\twoheadrightarrow} 

\def\SEMI{\mbox{$\times\kern-2pt\vrule height5pt width.6pt \kern3pt $}}

\newcommand{\defeq}{\mathrel{:=}} 

\newcommand{\dop}{\text{: }} 

\newcommand{\lge}{\left\{} 
\newcommand{\rge}{\right\}} 
\newcommand{\gekl}[1]{\lge #1 \rge} 
\newcommand{\menge}[2]{\gekl{ #1 \dop #2 }} 
\newcommand{\dom}{{\rm dom\,}}

\newcommand{\oset}[2]{%
  \mathop{#2}\limits^{\vbox to -1.66ex{%
  \kern -1.4ex\hbox{$#1$}\vss}}}

\newcommand{\pari}{\setlength{\parindent}{0.5cm} \setlength{\parskip}{0cm}}
\newcommand{\nopar}{\setlength{\parindent}{0cm} \setlength{\parskip}{0cm}}
\newcommand{\baltimes}[2]{
	\mathbin{_{#1}\times_{#2}}%
}

\usepackage{newtxtext}
\usepackage{newtxmath}

\begin{document}

\title{AF-action groupoid models for diagonal AH-algebras }

\thispagestyle{fancy}

\author{Ali I. Raad}

\address [] {Ali I. Raad, Mathematics and Science Department, American University in Bulgaria, Office 307 BAC, ul. Svoboda Bacharova 8, 2700 Blagoevgrad, Bulgaria} \email{araad@aubg.edu}\

\author{Jonathan Taylor}
\address{Jonathan Taylor, Institut für Mathematik, Universität Potsdam, Campus Golm, Haus 9, Karl-Liebknecht-Str. 24--25, 14476, Germany}
\email{jonathan.taylor@uni-potsdam.de}

\subjclass[2020]{Primary 46L05; Secondary 22A22}

\begin{abstract}
We show that an inductive AH-system with diagonal connecting maps describes an action of the canonical AF-groupoid on the spectrum of the canonical C\(^*\)-diagonal, and that the canonical groupoid model is given by the transformation groupoid associated to this action.
This divides the groupoid structure into two distinct aspects: the acting AF-groupoid (which is well-studied) and the unit space on which it acts.
We describe the unit space by enriching the Bratteli diagram with extra topological information, and apply these descriptions to examples of interest, including Villadsen algebras of the first kind.
\end{abstract}

\maketitle


\setlength{\parindent}{0cm} \setlength{\parskip}{0.5cm}

\section{Introduction}

Approximately homogeneous C$^*$-algebras (AH-algebras) arise as inductive limits of homogeneous C$^*$-algebras and have provided many examples important to the Elliott classification program.
A particular subclass of these are the diagonal AH-algebras: inductive limits built from matrix algebras over function spaces with diagonal connecting maps. 
These constraints on the inductive systems allow for finer analysis of the algebras they describe while still encompassing important examples. 
Elliott, Ho, and Toms \cite{EHT09} were able to characterise when such algebras are simple in terms of the inductive system, and further showed that any simple diagonal AH-algebra has stable rank one.
Recent work of Seth \cite[Theorem~3.7]{Seth26} characterises tensorial $K$-stability of these algebras in terms of the growth of the matrix dimensions of homogeneous algebras in the inductive system.

The building block algebras in a diagonal AH-system admit canonical groupoid models, and the connecting maps satisfy the criteria set out by Barlak and Li (see \cite[Theorem~3.6]{BL17} and \cite[Proposition~5.4]{Li18}) to ensure that the inductive limit C$^*$-algebra also admits an étale groupoid model. 
Since each groupoid in the inductive system is effective (so that the resulting C$^*$-algebra has a Cartan subalgebra) the groupoid model for the AH-algebra yields the Weyl groupoid corresponding to the inductive limit Cartan pair. 
In particular, the inductive limit of Cartan subalgebras yields a Cartan subalgebra of the inductive limit algebra.

Diagonal AH-systems give rise to a natural AF-system by considering only the matrix algebras over $\mathbb{C}$ in each of the elementary building blocks. 
The corresponding groupoid modelling the finite-dimensional subalgebras of each building block then acts on the diagonal subalgebra of the AH-algebra via conjugation by matrix units, and this passes to an action of a finite elementary AF-groupoid on a topological space. 
The diagonal conditions on the inductive system ensure that these actions are respected by the connecting maps, yielding an action of the canonical groupoid model of an AF-algebra on the spectrum of the  canonical inductive limit Cartan subalgebra of the AH-algebra. 
Our first main result shows that this action is enough to recover the Weyl groupoid of the AH-algebra as the transformation groupoid of this action.

\textbf{Theorem A.} (Theorem~\ref{thm: Af action on spectrum})
    Let \(\mathcal{G}\) be the groupoid model arising from the AF-subsystem of a diagonal AH-algebra $A$. 
    Let \(C\) be the canonical C$^*$-diagonal in \(A\). 
    There is an action of \(\mathcal G\) on \(\widehat{C}\) such that $A\cong C^*_{\mathrm r}(\mathcal{G}\ltimes \widehat C)$, where $\widehat{C}$ is the spectrum of $C$ and $\mathcal{G}\ltimes \widehat C$ is the transformation groupoid with respect to the action \(\mathcal G\curvearrowright \widehat C\).

Theorem~A divides the Weyl groupoid of a diagonal AH-algebra into two distinct components: the AF-groupoid associated to a canonical AF-subalgebra, and the action on the spectrum of the Cartan subalgebra.

We continue by describing the Gelfand spectrum of the canonical Cartan subalgebra of a diagonal AH-algebra. In the AF-setting, the spectrum is given by the space of infinite paths of the associated Bratteli diagram which start (or end, up to convention) at the root of the tree. 
To encode the additional topological information afforded by a diagonal AH-system, we employ labellings of Bratteli diagrams as developed by the first named author in \cite{Raa23}. 
This leads to the notion of labelled paths which carry a natural topology. 
We show that the spectrum of the canonical Cartan is given by the space of labelled paths in the Bratteli diagram, and that the resulting groupoid model for the AH-algebra is given by the groupoid of labelled tail-equivalences.

\textbf{Theorem B.} (Lemma \ref{lem:GBlambda}, Theorem \ref{thm:groupoid_as_labeled_Brat})
   Let $\mathcal{B}_{\lambda}$ be the labelled Bratteli diagram associated to a diagonal AH-algebra $A$. 
   The labelled tail-equivalence groupoid $\mathcal{G}_{\mathcal{B}_{\lambda}}$ of ${\mathcal{B}_{\lambda}}$ is isomorphic to $\mathcal{G}_{\mathcal B}\ltimes \widehat C$ and hence is a groupoid model for $A$.

We conclude by applying our results to some examples of interest. The first class of examples are the Villadsen algebras of the first kind \cite{Vil98}. This class includes the Goodearl algebras \cite{Goo} as well as Toms' examples of non-classifiable C$^*$-algebras \cite{Toms}. 

\textbf{Example C.} (Proposition~\ref{prop: example VillFirst}) The groupoid model for a general Villadsen algebra of the first kind $A$ is given by $$\mathcal{G}_{\mathcal{B}}\ltimes E_{\mathcal{P}_{\mathcal{B}}^{\infty}}$$ where $E$ is fibred over $\mathcal{P}_{\mathcal{B}}^{\infty}$, the space of infinite paths on the associated Bratteli diagram $\mathcal{B}$ of the AF-subsystem. If $X$ is the underlying compact Hausdorff space in the inductive system defining \(A\), then each fibre of \(E\) is of the form $\{*\}, X^{n}$, or $X^{\infty}$ (depending on the Villadsen algebra in question).

\textbf{Example D.} (Proposition~\ref{prop: exampleAHDynam}) Let \(X\) be a compact Hausdorff space and let \(\sigma\colon X\to X\) be a homeomorphism.
Let \(A\) be the AH-algebra model for the dynamical system \((X,\sigma)\), that is, the inductive limit of the system
\[\cdots\to C(X,M_{2^n})\xrightarrow{\phi_n}C(X,M_{2^{n+1}})\to\cdots\]
with connecting maps \(\phi_n\colon f\mapsto\begin{pmatrix}
    f&0\\
    0&f\circ\sigma
\end{pmatrix}\). Let \(G_{2^\infty}\) be the canonical groupoid model for the CAR algebra \(M_{2^\infty}\).
Then there is an action \(G_{2^\infty}\curvearrowright \{0,1\}^\Nz\times X\) such that \(G_{2^\infty}\ltimes (\{0,1\}^\Nz\times X)\) is the groupoid model for \(A\) arising from this AH-system.

The article is structured as follows.
Section~2 consists of preliminaries; we describe diagonal AH-systems and Cartan subalgebras, and in particular the canonical Cartan subalgebras arising in diagonal AH-algebras.
In Section~3 we describe the canonical AF-subsystem of a diagonal AH-system and the action of its groupoid model on the canonical Cartan subalgebra of the AH-algebra. 
We prove Theorem~A, describing the Weyl groupoid of the AH-algebra as the transformation groupoid for this action.
In Section~4 we describe the Gelfand spectrum of the canonical Cartan subalgebra of a diagonal AH-algebra as a space of labelled paths in the Bratteli diagram. 
Combining this with a natural notion of tail-equivalence for labelled paths, we prove Theorem~B, yielding another description of the Weyl groupoid of a diagonal AH-algebra which arises naturally from the analysis of path spaces on Bratteli diagrams.
Finally, in Section~5 we apply our results to some classical examples of diagonal AH-algebras.

\textbf{Acknowledgements.}

The authors thank Sven Raum for helpful comments on earlier drafts.

\section{Diagonal AH-systems and algebras}

We recall the definition of diagonal AH-systems as in \cite{EHT09}.
Let \(X\) and \(Y\) be compact Hausdorff spaces and let \(n,k\in\Nz\) be natural numbers.
An \emph{elementary diagonal map} is a \({}^*\)-homomorphism \(M_k\otimes C(X)\to M_{kn}\otimes C(Y)\) of the form
\[f\mapsto \mathrm{diag}(f\circ\lambda_1,\dots,f\circ\lambda_n)\]
for continuous functions \(\lambda_1,\dots,\lambda_n\colon Y\to X\), called the \emph{eigenfunctions}.
Here we identify \(M_k\otimes C(X)\) with \(C(X,M_k)\) and $M_k(C(X))$ in the usual way.

A \({}^*\)-homomorphism \(\varphi\colon\bigoplus_{i=1}^N M_{k_i}\otimes C(X_i)\to\bigoplus_{j=1}^M M_{\ell_j}\otimes C(Y_j)\) is a \emph{diagonal map} if each of the component \({}^*\)-homomorphisms \(\varphi_{j,i}\colon M_{k_i}\otimes C(X_i) \rightarrow M_{l_j}\otimes C(Y_j) \) is elementary diagonal (up to identification with the non-zero corner).
For the remainder of this article we shall consider unital diagonal maps. 

A \emph{diagonal AH-system} is an inductive system \((A_n,\varphi_n)_{n\in\Nz}\) of C\(^*\)-algebras \(A_n\) of the form
\begin{equation}\label{def:AH-form}
    A_n=\bigoplus\limits_{i=1}^{N_n}A_{n,i}, \; \; A_{n,i}=C(X_{n,i})\otimes M_{k_{n,i}}
\end{equation}
where each \(\varphi_n\colon A_n\to A_{n+1}\) is a unital diagonal map. We may as well assume that the maps are injective since this follows by \cite[Theorem~2.1,~Remark~2.2]{EGL05} and \cite[Theorem~2.2.10]{Li97}. 
A diagonal AH-system describes a \emph{diagonal AH-algebra} \(A:=\varinjlim_n (A_n,\varphi_n)_{n\in\Nz}\).

The class of diagonal AH-algebras is rich in examples, including AF-algebras \cite[Lemma~III.2.1]{dav96}, AI-algebras \cite[Section~3]{klaus2}, AH-algebra models for dynamical systems \cite[Example~2.5]{Niu}, Villadsen algebras of the first kind, particularly \cite{Vil98}, Goodearl algebras \cite{Goo} (see also \cite[Example~3.1.7]{Ror}), and Toms' examples of non-classifiable C*-algebras \cite{Toms}, \cite{Vil99}.

Each building block algebra \(A_n\) has a C\(^*\)-diagonal given by
\begin{equation}\label{eq:canonCartInBlocks}
    C_n=\bigoplus_{i=1}^{N_n} C_{n,i},\qquad C_{n,i}=C(X_{n,i})\otimes D_{k_{n,i}},
\end{equation} where $D_{k_{n,i}}$ is the diagonal subalgebra of $M_{k_{n,i}}.$
It follows quickly from their definition that diagonal connecting maps \(\varphi_n\) will restrict to maps between the canonical diagonal subalgebras.
The resulting inductive limit \(C:=\varinjlim_n(C_n,\varphi_n|_{C_n})_{n\in\Nz}\) embeds naturally into \(A\).

\subsection{Cartan subalgebras in inductive limits}

\bdefin[\cite{Kum,Ren08}]
\label{def:CartanSubalgebra}
A C\(^*\)-subalgebra $C$ of a C$^*$-algebra $A$ is a \emph{Cartan subalgebra} if
\nopar

\begin{itemize}
\item $C$ is a maximal commutative subalgebra;
\item $C$ is regular, that is, the set of normalisers $N_A(C) \defeq \menge{n \in A}{n C n^* \subseteq C \ \text{and} \ n^* C n \subseteq C}$ generates $A$ as a C$^*$-algebra;
\item there exists a faithful conditional expectation $P: \: A \onto C$.
\end{itemize}

A pair $(A,C)$, where $C$ is a Cartan subalgebra of a C$^*$-algebra $A$, is called a Cartan pair.
\pari

A Cartan subalgebra $C\subseteq A$ is a \emph{C$^*$-diagonal} if $(A,C)$ has the unique extension property, that is, every pure state of $C$ admits a unique extension to a (necessarily pure) state of $A$.
\edefin

Cartan subalgebras are a central tool for providing existence of twisted groupoid models for C\(^*\)-algebras.
We refer the reader to \cite{Sim} for an introduction to \'etale groupoids and their C\(^*\)-algebras, and to \cite{Ren08} for the Weyl groupoid and twist.

\begin{theorem}[{\cite[Theorems 5.2, 5.9]{Ren08}}, {\cite[Theorem 1.2]{Raa}}]\label{thm: Renaults theorem}
    Let $(\mathcal{G},\Sigma)$ be a locally compact Hausdorff \'etale and effective twisted groupoid.
    Then $(C^*_{\mathrm r}(\mathcal{G},\Sigma),C_0(\mathcal{G}^0))$ is a Cartan pair. Conversely, for every Cartan pair $(A,C)$ there exists a locally compact Hausdorff \'etale and effective twisted groupoid $(\mathcal{G},\Sigma)$ and an isomorphism of Cartan pairs $(A,C)\cong (C^*_{\mathrm r}(\mathcal{G},\Sigma),C_0(\mathcal{G}^0)).$
    Moreover, the twist \((\mathcal G,\Sigma)\) is unique among twists over locally compact Hausdorff \'etale groupoids leading to such an isomorphism.
\end{theorem}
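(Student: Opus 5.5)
The first direction is a direct check of the three Cartan axioms for the subalgebra \(C_0(\mathcal G^0)\) of \(A=C^*_{\mathrm r}(\mathcal G,\Sigma)\), using the standard structure of reduced twisted groupoid C\(^*\)-algebras. The conditional expectation \(P\) will be restriction of sections to the unit space. I will construct it on \(C_c(\mathcal G,\Sigma)\) and extend it by continuity using the reduced norm. Since \(\mathcal G\) is étale and Hausdorff, \(\mathcal G^0\) is clopen. This shows that \(P\) is well defined and contractive, and that it is faithful: \(P(f^*f)(x)\) is a sum of \(|f|^2\) over the fibre \(\mathcal G_x\), computed in the regular representation at \(x\).

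Regularity comes from open bisections. Any section supported on an open bisection \(U\) is a normaliser, because conjugation by it implements the partial homeomorphism \(r\circ s|_U^{-1}\) on \(C_0(\mathcal G^0)\). Such sections span \(C_c\), which is dense, so the normalisers generate \(A\). Maximal commutativity is where effectiveness enters. Take \(a\in A\) commuting with \(C_0(\mathcal G^0)\) and view it as a continuous section \(j(a)\) on \(\mathcal G\). Commutation gives \(j(a)(\gamma)\bigl(h(r(\gamma))-h(s(\gamma))\bigr)=0\) for all \(h\), so \(j(a)\) is supported on the isotropy. By effectiveness the interior of the isotropy is \(\mathcal G^0\). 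The open set \(\{j(a)\neq0\}\) therefore lies in \(\mathcal G^0\), so \(a\in C_0(\mathcal G^0)\).

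The converse is Renault's Weyl groupoid construction, which proceeds in three steps.
\begin{enumerate}
\item Write \(X=\widehat C\). Each normaliser \(n\) gives a partial homeomorphism \(\alpha_n\colon \{x: n^*n(x)>0\}\to\{x: nn^*(x)>0\}\). Let \(\mathcal G\) be the groupoid of germs \([x,\alpha_n,y]\), with the germ topology. It is étale and effective by construction.
\item The twist \(\Sigma\) is built from equivalence classes of pairs \((x,n,y)\), identified when \(n\) and \(m\) have the same germ at \(y\) and \(\varphi_y(m^*n)>0\).
\item The isomorphism \(A\to C^*_{\mathrm r}(\mathcal G,\Sigma)\) is \(a\mapsto \hat a\), where \(\hat a(x,n,y)=\varphi_y(n^*a)/\sqrt{\varphi_y(n^*n)}\). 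This uses \(P\) to define the evaluation states \(\varphi_y\circ P\).
\end{enumerate}

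The main obstacle is Hausdorffness of \(\mathcal G\). It needs the fact that the conditional expectation is faithful and unique, and that \(P\) vanishes on normalisers off the interior of their fixed-point set, which is Renault's key lemma. Isometry of \(a\mapsto\hat a\) will use faithfulness of \(P\) together with the fact that the GNS representations of the states \(\varphi_y\circ P\) match the regular representations. The original Renault argument assumed separability; for the non-separable case I would appeal to the cited refinement \cite{Raa}.

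Uniqueness should follow by recovering the twist intrinsically from the pair. Given a twist with \((A,C)\cong(C^*_{\mathrm r}(\mathcal G,\Sigma),C_0(\mathcal G^0))\), the normalisers supported on open bisections generate the germ data. One checks that their germs reproduce \(\mathcal G\) and \(\Sigma\) up to isomorphism, again using effectiveness so that germs determine arrows.
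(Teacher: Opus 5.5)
The paper does not prove this statement; it imports it from Renault \cite{Ren08} and Raad \cite{Raa}. Your outline of the first direction and of the existence half of the converse is that same argument: maximal abelianness via effectiveness, bisection-supported normalisers, the Weyl groupoid and twist, the map $a\mapsto\hat a$, and Hausdorffness from the vanishing of $P(n)$ off $\mathrm{Int}\,\mathrm{Fix}(\alpha_n)$.

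Two small corrections in that part:
\begin{itemize}
\item Faithfulness of $P$ must be checked on all of $C^*_{\mathrm r}(\mathcal G,\Sigma)$, not only on $C_c$, because faithfulness on a dense subalgebra does not pass to the closure by itself. The regular representations give $P(a^*a)(r(\gamma))=\|\pi_{s(\gamma)}(a)\delta_\gamma\|^2$ for every $a$, which settles it.
\item Hausdorffness of the Weyl groupoid uses only that $P$ is a $C$-bimodule map onto a maximal abelian $C$. This already yields $|P(n)|=\sqrt{n^*n}$ on $\mathrm{Int}\,\mathrm{Fix}(\alpha_n)$. Faithfulness is needed only for the isometry of $a\mapsto\hat a$, and uniqueness of $P$ is not used.
\end{itemize}

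The genuine gap is in the uniqueness part. Take a competing twist $(\mathcal H,\Sigma')$ with $(A,C)\cong(C^*_{\mathrm r}(\mathcal H,\Sigma'),C_0(\mathcal H^0))$. The Weyl groupoid is built from germs of \emph{all} normalisers of $C_0(\mathcal H^0)$, so your sentence that the normalisers supported on open bisections generate the germ data is exactly what has to be proved. A normaliser $n$ whose section $j(n)$ is nonzero at two arrows with the same source would produce germs and twist classes not matched by a single element of $\Sigma'$.

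The missing lemma is this: if $\mathcal H$ is effective, then for every normaliser $n$ the open set $S=\{j(n)\neq0\}$ is a bisection. In the second countable case this is Renault's characterisation of normalisers for topologically principal twists. For effective $\mathcal H$ it goes as follows.
\begin{itemize}
\item Compare Kumjian's relation $n^*hn(x)=h(\alpha_n(x))\,n^*n(x)$ with $n^*hn(x)=\sum_{\gamma\in\mathcal H_x}|j(n)(\gamma)|^2h(r(\gamma))$ for all $h\in C_0(\mathcal H^0)$. This forces every $\gamma\in S\cap\mathcal H_x$ to have range $\alpha_n(x)$.
\item Suppose $\gamma_1\neq\gamma_2$ were two such arrows. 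Choose open bisections $U_i\subseteq S$ around $\gamma_i$ with a common source set. Then $U_1^{-1}U_2$ is an open subset of the isotropy containing the non-unit $\gamma_1^{-1}\gamma_2$, contradicting effectiveness.
\item Applying the same argument to $n^*$ gives injectivity of $r$ on $S$.
\end{itemize}
Only with this lemma is the assignment sending $[\alpha_n(x),n,x]$ to the element $\sigma\in\Sigma'$, lying over the unique arrow of $S$ with source $x$, at which $j(n)(\sigma)>0$, a well-defined isomorphism of twists.

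Moreover, the statement only assumes the competing groupoid is Hausdorff and \'etale, so you must derive effectiveness before invoking it. Suppose $\mathrm{Int}\,\mathrm{Iso}(\mathcal H)\neq\mathcal H^0$. Then a nonzero section supported on an open bisection inside the open set $\mathrm{Int}\,\mathrm{Iso}(\mathcal H)\setminus\mathcal H^0$ commutes with $C_0(\mathcal H^0)$ without lying in it. This contradicts maximality of $C_0(\mathcal H^0)$, which holds because the isomorphism is one of Cartan pairs.
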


If \(A=\varinjlim_n A_n\) is a diagonal AH-algebra, the inductive limit \(C\) of diagonals is a C\(^*\)-diagonal in the sense of Definition~\ref{def:CartanSubalgebra}. This follows by \cite[Proposition~5.4]{Li18} and \cite[Theorem~3.6]{BL17}. 
In addition, the Weyl groupoid and twist associated to this inductive limit arise from the Weyl groupoids and twists of the inductive system. Since the Weyl twist associated to the building block inclusions $C_n\subseteq A_n$ is trivial it follows that so is the Weyl twist for the inclusion $C\subseteq A.$

\section{The AF-subsystem and its action}

Given a diagonal AH-system \((A_n,\varphi_n)_n\), the description \eqref{def:AH-form} shows that each building block \(A_n\) contains an obvious finite-dimensional subalgebra
\begin{equation}\label{def:AF-subsyst}
    B_n=\bigoplus\limits_{i=1}^{N_n}B_{n,i}, \; \; B_{n,i}=1\otimes M_{k_{n,i}}.
\end{equation}
Diagonal maps \(\varphi_n\) preserve these subalgebras, so we may restrict the system \((A_n,\varphi_n)_{n\in\Nz}\) to an AF-system \((B_n,\varphi_n|_{B_n})_{n\in\Nz}\).
The corresponding AF-algebra \(B=\varinjlim_n (B_n,\varphi_n|_{B_n})_{n\in\Nz}\) then embeds naturally into \(A\) extending the inclusions \(B_n\subseteq A_n\), and we identify \(B\) with its image in \(A\) under this embedding.
Each \(B_n\) also has a diagonal \(D_n\) corresponding to the diagonal matrices in the direct summands of \(B_n\), and the diagonal connecting maps preserve these as well.
The resulting inductive limit \(D\subseteq B\) is again a C\(^*\)-diagonal of \(B\), and is exactly the intersection \(C\cap B\), where \(C\subseteq A\) is the canonical C\(^*\)-diagonal of \(A\) prescribed by the inductive system.

The seminal work of Bratteli \cite{B72} allows AF-systems to be faithfully encoded as Bratteli diagrams.

A \emph{Bratteli diagram} (see \cite{B72} for details) is a directed graph \(\mathcal B=(V,E,r,s)\) where the vertex and edge sets \(V\) and \(E\) admit partitions
\[V=\bigsqcup_{n=0}^\infty V_n,\qquad E=\bigsqcup_{n=0}^\infty E_n\]
into finite sets \(V_n\) and \(E_n\) satisfying \(s(E_n)=V_n\), \(r(E_n)=V_{n+1}\), and \(V_0=\{v_0\}\).
The unique point \(v_0\in V_0\) is called the \emph{root} of the diagram.

Given a Bratteli diagram \(\mathcal B\) one associates an AF-system \((B_n,\phi_n)_{n\in\Nz}\) with unital diagonal connecting maps in the following way: for each vertex \(v\) in the diagram let \(n_v\) be the number of paths from the root \(v_0\) to \(v\) (note that this is necessarily positive). 
Define \(B_n:=\bigoplus_{v\in V_n} M_{n_v}\) and define \(\phi_n\colon B_n\to B_{n+1}\) as the homomorphism which maps each \(M_{n_v}\) into \(M_{n_w}\) for \(v\in V_n\) and \(w\in V_{n+1}\) with multiplicity given by the number of edges \(v\to w\) in the diagram \(\mathcal B\). 

Any AF-system with diagonal connecting maps gives rise to an associated Bratteli diagram by reverse-engineering the above construction (see e.g. \cite[Chapter~2]{Raeburn_GraphAlg} for details but note that we have adopted the opposite path convention in this article).
By \cite[Proposition~2.12]{Raeburn_GraphAlg}, if \(\mathcal B\) is a Bratteli diagram associated to an AF-system \((B_n,\phi_n)_n\) then the inductive limit \(B:=\varinjlim_n B_n\) is isomorphic to a full corner of the graph algebra associated to \(\mathcal B\).
It is shown in \cite{KumjianPaskRaeburnRenault_GraphsGrpdsCKAlgs} that this full corner admits a groupoid model which we shall now describe.

Given a Bratteli diagram \(\mathcal B=(V,E,r,s)\) let \(\mathcal{P}_{\mathcal B}^\infty\) be the space of infinite paths \(x_1 x_2\cdots\) (read left-to-right) starting at the root \(v_0\).
For a path \(x=x_1\cdots\), let \(x_{[n,m]}\) denote the segment \(x_n\cdots x_m\).
The set \(\mathcal{P}_{\mathcal B}^\infty\) carries a natural topology with a base of open sets indexed by initial segments: for a finite path \(p=e_1\cdots e_n\) starting at \(v_0\) we define
\[Z_p:=\{x\in \mathcal{P}^\infty_{\mathcal B}: x_{[1,n]}=p\},\]
that is, elements of \(Z_p\) are the infinite paths starting at \(v_0\) with initial segment given by \(p\).
Equipped with this topology the infinite path space becomes a totally disconnected compact Hausdorff space.

Two infinite paths \(x,y\in \mathcal P^\infty_{\mathcal B}\) are \emph{tail-equivalent} if there is \(m\in\Nz\) such that \(x_{[m,\infty)}=y_{[m,\infty)}\). 
One readily verifies that this is an equivalence relation, and we define \(\mathcal{G}_{\mathcal B}\) as the pair groupoid with respect to this relation.
Explicitly, elements of \(\mathcal{G}_{\mathcal B}\) are pairs \((x,y)\) of tail-equivalent infinite paths starting at the root \(v_0\) and the groupoid composition is given by
\[(x,y)(y,z)=(x,z).\]
The topology on \(\mathcal{G}_{\mathcal B}\) admits a base of open bisections of the form 
\[Z_{p,q}=\{(x,y)\in \mathcal{G}_{\mathcal B}: x_{[1,n]}=p, y_{[1,n]}=q, x_{[n+1,\infty)}=y_{[n+1,\infty)}\},\]
where \(p,q\) are paths of length \(n\) in \(\mathcal B\) and terminating at the same vertex in $V_n$.
In particular, \(\mathcal G_{\mathcal B}\) is \'etale.
Combining \cite[Proposition~2.12]{Raeburn_GraphAlg} and \cite[Theorem~3.1]{KumjianPaskRaeburnRenault_GraphsGrpdsCKAlgs} yields that the groupoid \(C^*\)-algebra \(C^*_{\mathrm r}(\mathcal{G}_{\mathcal B})\) is isomorphic to the AF-algebra arising from the inductive limit of the AF-system associated to \(\mathcal B\).
The basic open sets \(Z_{p,q}\subseteq \mathcal{G}_{\mathcal B}\) form compact open bisections of \(\mathcal{G}_{\mathcal B}\) and the indicator functions \(1_{Z_{p,q}}\in C^*_{\mathrm r}(\mathcal{G}_{\mathcal B})\), for paths \(p,q\) of length \(n\), correspond to matrix units in the matrix algebra \(B_n\).

Any matrix unit in \(B_n\) normalises the subalgebra \(C_n\subseteq A_n\), and since the connecting maps in a diagonal AH-system map matrix units to (amplified) matrix units, we see that images in \(B_{n+k}\) of matrix units in \(B_n\) also normalise the subalgebra \(C_{n+k}\subseteq A_{n+k}\).
This property then passes to the inductive limit; the image of a building block matrix unit in the limit \(A\) normalises \(C\).
By \cite[2.6]{Kum}, these matrix units implement partial homeomorphisms on the Gelfand dual \(\widehat C\) of \(C\).
These partial homeomorphisms implement an action of \(\mathcal G_{\mathcal B}\) on \(\widehat C\).

We recall the definition of a groupoid action on a topological space and the construction of the transformation groupoid.

For topological spaces \(X,Y,Z\) and continuous maps \(f\colon X\to Z\) and \(g\colon Y\to Z\) we write
\[X\baltimes{f}{g} Y:=\{(x,y)\in X\times Y: f(x)=g(y)\}\]
for the \emph{pullback} or \emph{fibre product} of \(X\) and \(Y\) along \(f\) and \(g\). We treat \(X\baltimes{f}{g} Y\) with the subspace topology of the product topology.

For an \'etale groupoid \(\mathcal{G}\) and topological space \(X\), an \emph{action} \(\mathcal{G}\curvearrowright X\) of \(\mathcal{G}\) on \(X\) consists of a continuous \emph{anchor map} \(\rho\colon X\to \mathcal{G}^{(0)}\) and a continuous map \(\cdot\colon \mathcal{G}\baltimes{s}{\rho}X\to X\) denoted \((\gamma,x)\mapsto\gamma\cdot x\) satisfying
\begin{enumerate}
    \item \(\rho(x)\cdot x=x\) for all \(x\in X\),
    \item \(\rho(\gamma\cdot x)=r(\gamma)\) for all \((\gamma,x)\in \mathcal{G}\baltimes{s}{\rho}X\), and
    \item \((\eta\gamma)\cdot x=\eta\cdot (\gamma\cdot x)\) for all \(\eta,\gamma\in G\) and \(x\in X\) with \(s(\eta)=r(\gamma)\) and \(s(\gamma)=\rho(x)\).
\end{enumerate}
Note that the second condition ensures that either side of the equation in the third condition is defined if the other is.

Given an action \(\mathcal{G}\curvearrowright X\) of an \'etale groupoid on a topological space with anchor map \(\rho\), the \emph{transformation groupoid} \(\mathcal{G}\ltimes X\) is defined in the following way.
As a topological space, \(\mathcal{G}\ltimes X\) is identical to the pullback \(\mathcal{G}\baltimes{s}{\rho}X\).
The range and source maps are given by \(r(\gamma,x)=(r(\gamma),\gamma\cdot x)\) and \(s(\gamma,x)=(s(\gamma),x)\) and the composition is given by
\[(\eta,\gamma\cdot x)(\gamma,x)=(\eta\gamma,x)\] for $(\eta,\gamma)\in\mathcal{G}^{(2)}.$
These operations make \(\mathcal{G}\ltimes X\) an \'etale groupoid.
The map \(X\to (\mathcal{G}\ltimes X)^{(0)}\) sending \(x\) to \((\rho(x),x)\) is then a homeomorphism, and we often identify the unit space \((\mathcal{G}\ltimes X)^{(0)}\) and \(X\) under this homeomorphism.

We can now describe the action of the AF-groupoid \(\mathcal G_{\mathcal B}\) on the spectrum \(\widehat C\).

\begin{proposition}\label{prop:AFGrpdActOnAHSpace}
    Let \(A\) be a diagonal AH-algebra with canonical diagonal \(C\), and let \(B\) be the associated AF-subalgebra.
    Let \(\widehat{C}\) be the Gelfand dual of the C$^*$-diagonal \(C\).
    Conjugation by matrix units in \(B_n\subseteq B\) defines an action of \(\mathcal{G}_{\mathcal B}\) on \(\widehat{C}\).
    Explicitly, the anchor map \(\rho\colon \widehat{C}\to \mathcal{P}^\infty_{\mathcal B}\) for the action is the Gelfand dual of the inclusion \(C\cap B\subseteq C\) and the action composition is given by
    \begin{equation}\label{eq:AFActionOnDiag}
        \left((x,y)\cdot \chi\right)(c)=\chi(1_{Z_{p,q}}^*c1_{Z_{p,q}}),
    \end{equation}
    for \((x,y)\in \mathcal{G}_{\mathcal B}\), \(\chi\in\widehat{C}\), and \(c\in C\) with \(y=\rho(\chi)\) and \(p,q\) finite paths in \(\mathcal B\) starting at \(v_0\) with \((x,y)\in Z_{p,q}\).
    \begin{proof}
        We first check that the conjugation formula \eqref{eq:AFActionOnDiag} is well-defined.
        Fix \((x,y)\in \mathcal{G}_{\mathcal B}\), \(\chi\in \widehat C\) with \(\rho(\chi)=y\), and \(c\in C\).
        Since \(1_{Z_{p,q}}\) is a matrix unit in \(B_n\) for any finite paths \(p\) and \(q\) of length \(n\) starting at \(v_0\) and terminating at the same vertex, the diagonal connecting maps ensure that the images of \(1_{Z_{p,q}}\) in later building blocks \(A_{n+k}\) normalise the diagonal \(C_{n+k}\) for all \(k\in\Nz\).
        Hence \(1_{Z_{p,q}}\) is a normaliser of \(C\) in \(A\) so the right-hand side of \eqref{eq:AFActionOnDiag} is coherent.
        We now show that the right-hand side of \eqref{eq:AFActionOnDiag} is independent of the choice of initial segments \(p\) and \(q\) of \(x\) and \(y\) respectively.
        Fix alternative initial segments \(p'\) and \(q'\) with \((x,y)\in Z_{p',q'}\).
        Without loss of generality we may assume \(p\) and \(q\) are shorter initial segments than \(p'\) and \(q'\). 
        We then have \(Z_{p'}\subseteq Z_p\), \(Z_{q'}\subseteq Z_q\), and hence 
        $$Z_{p'}Z_{p,q}=Z_{p',q'}=Z_{p,q}Z_{q'}.$$  
        As \(\rho(\chi)=y\) and \(y\in Z_{q'}\subseteq Z_q\) we have \(\chi(1_{Z_{q'}})=1\).
        We compute
        \begin{align*}
            \chi(1_{Z_{p',q'}}^*c1_{Z_{p',q'}})&=\chi(1_{Z_{q'}}1_{Z_{p,q}}^*c1_{Z_{p,q}}1_{Z_{q'}})\\
            &=\chi(1_{Z_{q'}})\chi(1_{Z_{p,q}}^*c1_{Z_{p,q}})\chi(1_{Z_{q'}})\\
            &=\chi(1^*_{Z_{p,q}}c1_{Z_{p,q}})
        \end{align*}        
        showing that the expression for \((x,y)\cdot\chi\) does not depend on choice of initial segments \(p\) and \(q\) with \((x,y)\in Z_{p,q}\).

        Under the identification \(\mathcal{P}^\infty_\mathcal B\ni y\mapsto (y,y)\in \mathcal{G}_{\mathcal B}^{(0)}\), verifying \(\rho(\chi)\cdot \chi=\chi\) is straightforward.
        We now show that \(\rho((x,y)\cdot\chi)=r(x,y)=x\).
        As \(\rho\) is the Gelfand dual map to the inclusion \(C(\mathcal{P}^\infty_\mathcal B)\subseteq C\), we must show that \((x,y)\cdot\chi(f)=f(x)\) for all \(f\in C(\mathcal{P}^\infty_{\mathcal B})\).
        Since \(\mathcal{P}^\infty_{\mathcal B}\) has a base of compact open sets of the form \(Z_t\) for initial path segments \(t\), it suffices to verify the equality on indicator functions on these sets.
        Fix such a finite path \(t\) starting at \(v_0\).
        The path \(t\) forms an initial segment of \(x\) if and only if \(1_{Z_t}(x)=1\).
        In this case, set \(p:=t\) and let \(q\) be the initial segment of \(y\) of the same length.
        Then \(1_{Z_{p,q}}^*1_{Z_t}1_{Z_{p,q}}=1_{Z_q}\) and so \((x,y)\cdot\chi(1_{Z_t})=\chi(1_{Z_q})=1\) since \(\rho(\chi)=y\) and \(y\in Z_q\).
        Conversely, if \(t\) does not form an initial segment of \(x\) then let \(p\) be the initial segment of \(x\) of the same length (or longer) than \(t\), and let \(q\) be the corresponding initial segment of \(y\).
        Then \(Z_tZ_{p,q}=\emptyset\) since \((w,z)\in Z_{p,q}\) implies \(p\) is an initial segment of \(w\), whereby \(t\) cannot possibly be an initial segment of \(w\).
        Hence \(1_{Z_t}1_{Z_{p,q}}=0\) and hence \((x,y)\cdot\chi(1_{Z_t})=\chi(0)=0=1_{Z_t}(x)\), as required.

        Finally, to see that \((z,x)\cdot((x,y)\cdot \chi)=(z,y)\cdot\chi\) for \((z,x)\in \mathcal{G}_\mathcal B\) we note that if \(p\), \(q\), and \(r\) are initial segments of \(x\), \(y\), and \(z\) respectively of the same length and terminating at the same vertex, then \(Z_{r,p}Z_{p,q}=Z_{r,q}\) is a bisection neighbourhood of \((z,x)(x,y)=(z,y)\).
        This implies \(1_{Z_{r,p}}1_{Z_{p,q}}=1_{Z_{r,q}}\) and so for any \(c\in C\) we then have
        \begin{align*}
            \left((z,x)\cdot\left((x,y)\cdot\chi\right)\right)(c)&=\left((x,y)\cdot\chi\right)(1_{Z_{r,p}}^*c1_{Z_{r,p}})\\
            &=\chi(1_{Z_{p,q}}^*1_{Z_{r,p}}^*c1_{Z_{r,p}}1_{Z_{p,q}})\\
            &=\chi(1_{Z_{r,q}}^*c1_{Z_{r,q}})\\
            &=\left((z,y)\cdot\chi\right)(c)
        \end{align*}
        as required.
    \end{proof}
\end{proposition}

As the action \(\mathcal G_{\mathcal B}\curvearrowright\widehat C\) is implemented by conjugation with normalisers of \(C\) in the AH-algebra \(A\), it is reasonable to expect a natural mapping from the transformation groupoid \(\mathcal G_{\mathcal B}\ltimes\widehat C\) into the Weyl groupoid \(\mathcal G_A\) of the inclusion \(C\subseteq A\). 
We show that this is indeed the case, and that this mapping is an isomorphism.
We recall briefly the construction of the Weyl groupoid from a Cartan inclusion \(C\subseteq A\).
Let \(X\) denote the Gelfand spectrum of \(C\).
For a normaliser \(n\in N_C(A)\) set \(\dom(n):=\{x\in X: n^*n(x)\neq 0\}\) and \(\mathrm{ran}(n)=\dom(n^*)\).
By \cite[2.6]{Kum}, \(n\) defines a partial homeomorphism \(\alpha_n\colon\dom(n)\to\mathrm{ran}(n)\) satisfying 
\begin{equation}\label{eq:unique_homeo}
n^*fn(x)=n^*n(x)f(x)
\end{equation}
for all \(x\in\dom(n)\).
The Weyl groupoid is then the groupoid of germs \(\mathcal G\) arising from this family of partial homeomorphisms.
Elements of $\mathcal{G}$ are equivalence classes $[\alpha_n,x]$ where $x\in\dom(n)$ subject to the \emph{germ relation}: two pairs $(\alpha_{n_1},x_1)$ and $(\alpha_{n_2},x_2)$ are equivalent if and only if $x_1=x_2$ and there exists an open neighbourhood $U\subset \dom(n_1)\cap\dom(n_2)$ of \(x\) such that $\alpha_{n_1}|_{U}=\alpha_{n_2}|_U$.

\begin{theorem}\label{thm: Af action on spectrum}
    Let \(\mathcal{G}_{\mathcal B}\ltimes \widehat C\) be the transformation groupoid with respect to the action \eqref{eq:AFActionOnDiag} and let \(\mathcal{G}_A\) be the Weyl groupoid of the Cartan inclusion \(C\subseteq A\).
    There is an isomorphism \(F\colon \mathcal{G}_{\mathcal B}\ltimes \widehat C\to \mathcal{G}_A\) of topological groupoids given by
    \[F((x,y),\chi)=[\alpha_{1_{Z_{p,q}}},\chi],\]
    where \(p\) and \(q\) are initial segments of \(x\) and \(y\) of the same length such that \((x,y)\in Z_{p,q}\).
    \begin{proof}
        We first show that $F$ is well-defined. Since $(x,y)\in Z_{p,q}$ we have $y\in Z_q$, and since \(\rho(\chi)=y\) we obtain that \(\chi\) lies in the domain of \(\alpha_{1_{Z_{p,q}}}\). If $(x,y)\in Z_{p^{\prime},q^{\prime}}$ we may without loss of generality assume $p^{\prime}$ and $q^{\prime}$ are extensions of $p$ and $q$ respectively, necessarily agreeing after the terminal node of $p$ (or $q$). It follows that $\chi$ is also in the domain of $\alpha_{1_{Z_{p^{\prime},q^{\prime}}}}$ and $\alpha_{1_{Z_{p,q}}}\vert_{\rho^{-1}(Z_{q^{\prime}})}=\alpha_{1_{Z_{p^{\prime},q^{\prime}}}}\vert_{\rho^{-1}(Z_{q^{\prime}})}$ as \(p^\prime\) extends \(p\). Hence $[\alpha_{1_{Z_{p,q}}},\chi]=[\alpha_{1_{Z_{p^{\prime},q^{\prime}}}},\chi]$, demonstrating that $F$ is well-defined.
        
        To see that \(F\) defines a groupoid homomorphism, we fix a composable pair \((((z,x),\tau), ((x,y),\chi))\) in \(\mathcal{G}_{\mathcal B}\ltimes \widehat C\). Note that this implies that $\tau=(x,y)\cdot \chi.$ Fix initial segments \(t,p,q\) of \(z,x,y\) respectively, such that \((z,x)\in Z_{t,p}\) and \((x,y)\in Z_{p,q}\). The formulae \eqref{eq:unique_homeo} and \eqref{eq:AFActionOnDiag} ensure that \(\alpha_{1_{Z_{p,q}}}(\chi)=(x,y)\cdot \chi=\tau\), ensuring that \(F((z,x),\tau)\) and \(F((x,y),\chi)\) are composable in \(\mathcal G_A\).  
        We then have
        \[F((z,x),\tau)F((x,y),\chi)=[\alpha_{1_{Z_{t,p}}},\tau][\alpha_{1_{Z_{p,q}}},\chi]=[\alpha_{1_{Z_{t,q}}},\chi]=F((z,y),\chi),\]
        whereby \(F\) is a groupoid homomorphism.

        To see that \(F\) is open we fix a basic open bisection \((Z_{p,q}\times U) \cap (\mathcal{G}_{\mathcal B}\ltimes \widehat C)\) for finite paths \(p,q\) in \(\mathcal B\) and \(U\subseteq \widehat C\).
        The image of this set under \(F\) is the set \(\{[\alpha_{1_{Z_{p,q}}},\chi]:\chi\in U\cap \mathrm{dom}(\alpha_{1_{Z_{p,q}}})\}\subseteq \mathcal{G}_A\) which is a basic open set in the Weyl groupoid, so \(F\) is open.

        We now show that \(F\) is a bijection. Note that if  \(F((x,y),\chi)\in \mathcal{G}_A^{(0)}\) it follows that $[\alpha_{1_{Z_{p,q}}},\chi]$ is a unit which means $p=q$ and so $x=y$.
        Hence \(F\) is injective.
        We now show that \(F\) is surjective.
        Combining \cite[Theorem~3.6]{BL17} and \cite[Lemma~6.7, Theorem~6.12]{Tay} there is an isomorphism \(\mathcal{G}_A\cong \overline{ G}:=\varinjlim_n\varprojlim_m \mathcal{G}_n\ltimes\widehat{C_{n+m}}\) realising \(\mathcal{G}_A\) as the inductive limit of projective limits of transformation groupoids \(\mathcal{G}_n\ltimes\widehat{C_{n+m}}\), where \(\mathcal{G}_n\) is the Weyl groupoid associated to the Cartan inclusion \(C_n\subseteq A_n\).
        Moreover, this isomorphism can be chosen to map \([\alpha_{1_{Z_{p,q}}},\chi]\) for initial segments \(p\) and \(q\) and character \(\chi\in \widehat{C}\) in the domain of $\alpha_{1_{Z_{p,q}}}$ to the class of the sequence 
        \begin{equation}\label{eq:indLimSeqForm}
            ([\alpha_{1_{Z_{p,q}}},\chi_{n}],\chi_{n+m})_m
        \end{equation} 
        where \([1_{Z_{p,q}},\chi_{n}]\in \mathcal{G}_n\ltimes \widehat{C_{n}}\) and \((\chi_{n+m})_m\) is a sequence in the projective limit \(\varprojlim_m \widehat{C_{n+m}}\cong \widehat{C}\) realising \(\chi\). 
        Since the characteristic functions \(1_{Z_{p,q}}\) form matrix units for the matrix algebras in \(A_n\), every element of the Weyl groupoid \(\mathcal{G}_n\) can be realised as such a class \([\alpha_{1_{Z_{p,q}}},\chi_n]\), so every element of the inductive limit groupoid \(\overline{G}\) is represented by a class of the form in \eqref{eq:indLimSeqForm}.
        Hence \(F\) composed with the isomorphism \(\mathcal{G}_A\cong \overline{G}\) is surjective, whereby \(F\) is necessarily surjective.

        Finally we show continuity.
        The Weyl groupoid \(\mathcal G_A\) has a basis of open sets of the form \(\{[\alpha_n,\chi]: \chi\in\dom(n)\cap U\}\) for some \(n\in N_C(A)\) and \(U\subseteq\widehat C\) open.
        Using surjectivity of \(F\) we may cover this set with sets of the form \(\{[\alpha_{1_{Z_{p,q}}},\chi]: \chi\in\dom(1_{Z_{p,q}})\cap V\}\) for some paths \(p,q\) in \(\mathcal B\) and \(V\subseteq \widehat C\) open.
        The preimages of these sets under \(F\) have the form \((Z_{p,q}\times V)\cap(\mathcal G_B\ltimes\widehat C)\), which is open in the transformation groupoid \(\mathcal G_B\ltimes\widehat C\).
        Thus \(F\) is continuous.
    \end{proof}
\end{theorem}

The following corollary can be obtained by the techniques in \cite{BL17}, but in our setting the proof simplifies and so we include it for completeness.

\begin{corollary}\label{cor:dense normalizers}
    The union of building block normalisers \(\bigcup_{n\in\Nz}N_{C_n}(A_n)\) is dense in \(N_C(A)\).
    \begin{proof}
        Using Theorem~\ref{thm: Af action on spectrum} it suffices to prove the result for the inclusion \(C:=C(\widehat C)\subseteq A:=C^*_{\mathrm{r}}(\mathcal{G}_{\mathcal B}\ltimes\widehat C)\). 
        By \cite[Lemma~5.8]{Ren08} it suffices to show that the normalisers arising from building blocks \(C_n\subseteq A_n\) are dense in the normalisers belonging to \(C_c(\mathcal{G}_{\mathcal B}\ltimes\widehat C)\), and \cite[Corollary~5.6]{Ren08} implies that a normaliser \(m\in N_C(A)\) with compact support is supported on an open bisection \(U\subseteq \mathcal{G}_{\mathcal B}\ltimes\widehat C\).
        By a partition of unity argument, it suffices to consider \(m\in C_c(\mathcal{G}_{\mathcal B}\ltimes\widehat C)\) with support contained in a basic bisection of the form \((Z_{p,q}\times V)\cap (\mathcal{G}_{\mathcal B}\ltimes\widehat C)\), where \(p,q\) are finite paths of the same length in the Bratteli diagram starting at the root \(v_0\) and terminating at the same vertex, and \(V\subseteq\widehat C\) is an open subset.
        Then \(f:=1_{Z_{p,q}}^*m\in C(\widehat C)=\varinjlim_nC_n\) and hence we can find a sequence \(f^j\in C_j\) converging to \(f\).
        For large enough $j$, \(1_{Z_{p,q}}f_j\) is a normaliser for the inclusion \(C_j\subseteq A_j\) and \(1_{Z_{p,q}}f_j\to 1_{Z_{p,q}}f=1_{Z_{p,q}}1_{Z_{p,q}}^*m\).
        But \(1_{Z_{p,q}}1_{Z_{p,q}}^*m=m\) since \(m\) has support contained in \(Z_{p,q}\times \widehat C\), completing the proof.
    \end{proof}
\end{corollary}

\section{The spectrum of the diagonal}

Theorem~\ref{thm: Af action on spectrum} divides the description of the Weyl groupoid for a diagonal AH-algebra \(A\) into two parts: the AF-component described by the Bratteli diagram and AF-groupoid, and the spatial component given by the Gelfand spectrum of the C\(^*\)-diagonal \(C\).
We now describe the second component.

In \cite{Raa23} the first author introduced the notion of a labelled Bratteli diagram to graphically encapsulate the information of an AH-algebra arising from generalised diagonal connecting maps. This is a refinement of the classic Bratteli diagram, which in addition to keeping track of the AF-subsystem, also keeps track of the spaces and eigenfunctions between building blocks.

The spectrum $\widehat{C}$ may be described as the infinite path space on the labelled Bratteli diagram with respect to the eigenfunctions which make up the edges of this path, allowing us to compute explicit models for various examples later. The tail equivalence groupoid for labelled Bratteli diagrams is a natural generalisation of the tail equivalence groupoid model of an AF-algebra (see \cite{KumjianPaskRaeburnRenault_GraphsGrpdsCKAlgs}), incorporating the necessary extra topological information into the path structure.

Assume $A$ is a diagonal AH-algebra. 
The \emph{associated labelled Bratteli diagram} $\mathcal{B}_{\lambda}=(V,E,r,s,\lambda)$ is an infinite simple graph with vertices $V$ and edges $E$ such that $V = \bigsqcup\limits_{n=0}^\infty V_n$ and $E = \bigsqcup\limits_{n=0}^\infty E_n$, where $$V_n=\{X_{n,1}, \ldots, X_{n,N_n}\}, \; E_n=\bigsqcup\limits_{  y \in Y(n,(i,j))}\{e_y\}.$$ Here $Y(n,(i,j))$ is a finite indexing set, with cardinality equal to the multiplicity of the embedding $C(X_{n,j})\otimes M_{k_{n,j}} \hookrightarrow C(X_{n+1,i})\otimes M_{k_{{n+1},i}}$. The source and range maps $r,s$ are defined by $s: E_n \to V_n$, $s(e_y)=X_{n,j}$ and $r: E_n \to V_{n+1}$, $r(e_y)=X_{n+1,i}$ whenever $y\in Y(n,(i,j)).$
We assume that $V_0 = \{0\}$ and that $E_0= \bigsqcup\limits_{y=1}^{\sum\limits_{j=1}^{N_1}k_{1,j}}\{0\}.$

We label the edges by using the function 
\begin{equation}
\lambda: E \to \bigcup_{n =1}^\infty \bigcup\limits_{j=1}^{N_{n}} \bigcup\limits_{i=1}^{N_{n+1}} C(X_{n+1,i},X_{n,j})
\end{equation}
such that $\lambda(e_y)=\lambda_y \in C(r(e), s(e))$ is the corresponding eigenfunction associated to edge $e_y$. 

Note that the underlying graph $(V,E,r,s)$ of a diagonal AH-system simplifies to the (unlabelled) Bratteli diagram $\mathcal{B}$ of its AF-subsystem.

Let $A$ be a diagonal AH-algebra and let $\mathcal{B}_{\lambda}=(V,E,r,s,\lambda)$ be the associated labelled Bratteli diagram.
A (labelled) path in $\mathcal{B}_{\lambda}$ is a finite or infinite sequence \(x^\lambda=x^{\lambda}_1\ldots x^\lambda_n\) or $x^{\lambda}=x^{\lambda}_1x^{\lambda}_2\ldots$ where each $x^{\lambda}_i=(e_i,t_i)$ is composed of an edge $e_i$ together with a point $t_i\in s(e_i)$ such that the sequence $e_1e_2e_3\ldots$ forms a path in the Bratteli diagram obtained by restricting to the AF-subsystem and $\lambda(e_i)(t_{i+1})=t_i$ for all $i$. We may write this as $x^{\lambda}=(e_i,t_i)_{i=1}^{\infty}$. We will denote the set of infinite paths in $\mathcal{B}_{\lambda}$ starting at the root of \(\mathcal B_\lambda\) by $\mathcal{P}_{\mathcal{B}_{\lambda}}^{\infty}$.
For a path $x^{\lambda}$, we will write $x^{\lambda}_{[k,m]}$ for the finite portion of the path $x^{\lambda}_k\ldots x^{\lambda}_n$ and we will write $x^{\lambda}_{[n,\infty)}$ for the portion $x^{\lambda}_n x^{\lambda}_{n+1}\ldots$.

Two paths $x^{\lambda}_{[k,m]}=(e_i,t_i)_{i=k}^m$ and $y^{\lambda}_{[m+1,n]}=(e^{\prime}_i,t^{\prime}_i)_{i=m+1}^n$ can be concatenated to form a path $z^{\lambda}_{[k,n]}=x^{\lambda}_{[k,m]}y^{\lambda}_{[m+1,n]}$ when $s(e^{\prime}_{m+1})=r(e_{m})$ and when $\lambda(e_m)(t^{\prime}_{m+1})=t_m$. When $n=\infty$ we form the natural generalisation of the above to infinite paths.

We will also consider the concatenation of an unlabelled finite path $p_{[k,m]}=(e_i)_{i=k}^m$ with a labelled path $y^{\lambda}_{[m+1,n]}=(f_i,t_i)_{i=m+1}^n$ whenever $s(f_{m+1})=r(e_m)$. When this is the case we obtain the labelled path $p_{[k,m]}y^{\lambda}_{[m+1,n]}=(g_i,t_i)_{i=k}^n$ where $g_i=e_i$ for $k\le i\le m$ and $g_i=f_i$ for $m+1\le i\le n$, and where $t_m=\lambda(e_m)(t_{m+1})$, and inductively defining $t_s=\lambda(e_{s})(t_{s+1})$ for $k\le s\le m-1.$ When $n=\infty$ we form the natural generalisation of the above to infinite paths. 

\begin{definition}
\label{AH:def:cylinder-set}
Let $p_{[1,n]}=(e_i)_{i=1}^n$ be a finite (unlabelled) path in the Bratteli diagram $\mathcal{B}$ of the AF-subsystem. Let $U \subseteq s(e_n)$ be an open set (when the edges are viewed inside $\mathcal{B}_{\lambda}$).
The \emph{cylinder set} $Z(p_{[1,n]},U)$ associated to \(p_{[1,n]}\) and \(U\) is
\begin{equation*}
    Z(p_{[1,n]},U):=\{p_{[1,n]}x_{[n+1,\infty)}^\lambda: x^{\lambda}_{[n+1,\infty)}=(e_i,t_i)_{i={n+1}}^\infty, s(e_{n+1})=r(p_n), t_n=\lambda(e_n)(t_{n+1})\in U\},
\end{equation*}
that is, \(Z(p_{[1,n]},U)\) consists of all paths $p_{[1,n]}x^{\lambda}_{[n+1,\infty)}\in \mathcal{P}^\infty_{\mathcal B_\lambda}$ whose $n^{\text{th}}$ coordinate is of the form $(e_n,t)$ where $t\in U.$
\end{definition}

\begin{proposition}
\label{AH:prop:cylinder-sets-basis}
Let $\mathcal{B}_{\lambda}=(V,E,r,s,\lambda)$ be the associated labelled Bratteli diagram for a diagonal AH-system.
The collection of all cylinder sets $Z(p_{[1,n]},U)$ for finite unlabelled paths $p_{[1,n]}=(e_i)_{i=1}^n$ and open sets $U \subseteq s(e_n)$ form a base for a topology on $\mathcal{P}_{\mathcal{B}_{\lambda}}^\infty$.
\end{proposition}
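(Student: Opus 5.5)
We verify the two standard criteria for a collection \(\mathcal{Z}\) of subsets to be a base: the sets cover \(\mathcal{P}_{\mathcal{B}_{\lambda}}^\infty\), and for any two cylinder sets and any point in their intersection there is a third cylinder set containing the point and contained in the intersection.

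\textbf{Covering.} Let \(x^\lambda=(e_i,t_i)_{i=1}^\infty\) be an infinite labelled path. Then \(x^\lambda\in Z(p_{[1,1]},s(e_1))\) with \(p_{[1,1]}=e_1\), since \(s(e_1)\) is open in itself. Alternatively, the whole space is the finite union of the sets \(Z(e,s(e))\) with \(e\in E_0\).

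\textbf{Intersections.} Take \(x^\lambda=(e_i,t_i)_{i\ge1}\) lying in \(Z(p_{[1,n]},U)\cap Z(q_{[1,m]},W)\), and assume without loss of generality that \(n\le m\). Membership of \(x^\lambda\) in both sets gives three facts: \(p_{[1,n]}\) is the initial segment of \(q_{[1,m]}\) (both are initial segments of the edge path of \(x^\lambda\)), \(t_n\in U\), and \(t_m\in W\). Consider the composite eigenfunction
\[\Lambda:=\lambda(e_n)\circ\lambda(e_{n+1})\circ\cdots\circ\lambda(e_{m-1})\colon s(e_m)\to s(e_n).\]
It is continuous and satisfies \(\Lambda(t_m)=t_n\) by the path condition \(\lambda(e_i)(t_{i+1})=t_i\). (When \(n=m\), \(\Lambda\) is the identity.) Set \(O:=W\cap\Lambda^{-1}(U)\). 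This is an open subset of \(s(e_m)\) and contains \(t_m\), so \(x^\lambda\in Z(q_{[1,m]},O)\).

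We then check that \(Z(q_{[1,m]},O)\subseteq Z(p_{[1,n]},U)\cap Z(q_{[1,m]},W)\). Let \(y^\lambda=(e_i,s_i)\) lie in the left-hand side; it follows \(q_{[1,m]}\) and has \(s_m\in O\). First, \(s_m\in W\), so \(y^\lambda\in Z(q_{[1,m]},W)\). Second, the labels of \(y^\lambda\) below level \(m\) are forced by the path condition, giving \(s_n=\Lambda(s_m)\in U\). Since \(y^\lambda\) also begins with \(p_{[1,n]}\), it lies in \(Z(p_{[1,n]},U)\).

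\textbf{Main obstacle.} The only delicate point is that the label at level \(n\) is determined by the label at level \(m\) through the continuous map \(\Lambda\). This is exactly what the definition of labelled paths and of concatenation with an unlabelled prefix guarantees. One should also check that cylinder sets with incompatible unlabelled prefixes are disjoint, but that case is vacuous: if \(p_{[1,n]}\) is not an initial segment of \(q_{[1,m]}\), the intersection is empty and there is nothing to prove.
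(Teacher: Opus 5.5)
Your proof is correct and is essentially the paper's argument. You cover the space by the sets $Z(e_1,s(e_1))$, and you refine an intersection $Z(p_{[1,n]},U)\cap Z(q_{[1,m]},W)$ to $Z(q_{[1,m]},W\cap\Lambda^{-1}(U))$ using the continuous composite eigenfunction $\Lambda$. You correctly place $t_m$ (not $t_n$, as the paper's text has it) in the new open set, and you spell out the containment that the paper leaves as ``by construction''.
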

\begin{proof}
    Let $x^{\lambda}=(e_i,t_i)_{i=1}^\infty \in \mathcal{P}_{\mathcal{B}_{\lambda}}^\infty.$ 
    It is clear that $x^{\lambda}\in Z((e_1),0)$, where \(0\in V_0\) is the space serving as the root of the labelled Bratteli diagram, and so each infinite path belongs to some cylinder set. 
    Now assume an infinite path $x^{\lambda}=(e_i,t_i)_{i=1}^{\infty}$ belongs to the intersection $Z(p_{[1,n]},U)\cap Z(q_{[1,m]},V)$ for some path $p_{[1,n]}=(f_i)_{i=1}^n$ with $U$ open in $s(f_n)$ and some path $q_{[1,m]}=(g_i)_{i=1}^m$ with $V$ open in $s(g_m)$. Assume without loss of generality that $n\le m.$ It then follows that $f_i=e_i$ for all $1\le i\le n$ and that $g_i=e_i$ for all $1\le i\le m.$ Let $W:= V \cap (\lambda(e_n) \circ\cdots \circ\lambda(e_{m-1}))^{-1}(U),$ which is open as all the eigenfunctions \(\lambda(e_i)\) are continuous. We then have \(t_n\in W\) and hence $x^{\lambda}\in Z(q_{[1,m]},W)\subset Z(p_{[1,n]},U)\cap Z(q_{[1,m]},V)$ by construction.
\end{proof}

We equip $\mathcal{P}_{\mathcal{B}_{\lambda}}^{\infty}$ with the topology generated by the base of these cylinder sets. 

\begin{definition}\label{def:labelled_Brat_Groupoid}
    In the setting above, we will say that two paths $x^{\lambda}, y^{\lambda} \in \mathcal{P}_{\mathcal{B}_{\lambda}}^{\infty}$ are \emph{tail-equivalent} if $x^{\lambda}_{[n,\infty)}=y^{\lambda}_{[n,\infty)}$ for some $n\in\mathbb{N}$. Define $\mathcal{G}_{\mathcal{B}_{\lambda}}$ as the pair groupoid arising from this tail equivalence relation. We equip $\mathcal{G}_{\mathcal{B}_{\lambda}}$ with the subspace topology inherited from the product topology on $\mathcal{P}_{\mathcal{B}_{\lambda}}^{\infty}\times \mathcal{P}_{\mathcal{B}_{\lambda}}^{\infty}$ (which necessarily makes the inverse and multiplication maps continuous).
\end{definition}

\begin{definition}
    Let $p_{[1,n]}=(e_i)_{i=1}^n$ and $q_{[1,n]}=(f_i)_{i=1}^n$ such that $s(e_n)=s(f_n)$. Let $U$ be an open subset of $s(e_n)$. Then we define a \emph{double cylinder set}  $Z(p_{[1,n]},q_{[1,n]},U)$ as consisting of all $(x^{\lambda},y^{\lambda})\in\mathcal{G}_{\mathcal{B}_{\lambda}}$ such that $x^{\lambda}_{[n,\infty)}=y^{\lambda}_{[n,\infty)}$ and such that $x^{\lambda} \in Z(p_{[1,n]},U)$ and $y^{\lambda}\in Z(q_{[1,n]},U)$.
\end{definition}

\begin{lemma}\label{lem:GBlambda}
    $\mathcal{G}_{\mathcal{B}_{\lambda}}$ is an \'etale groupoid. A basis for its topology is given by double cylinder sets $Z(p_{[1,n]},q_{[1,n]},U).$
\end{lemma}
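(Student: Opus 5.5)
The plan is to show directly that the double cylinder sets form a base for a topology on \(\mathcal{G}_{\mathcal{B}_{\lambda}}\), and then that \(r\) and \(s\) restrict to homeomorphisms from each double cylinder set onto a cylinder set of \(\mathcal{P}^\infty_{\mathcal{B}_\lambda}\); étaleness then follows. One point has to be settled first. As in the AF case, the subspace topology from the product topology is generally too coarse for \(\mathcal{G}_{\mathcal{B}_\lambda}\) to be étale: a unit \((x^\lambda,x^\lambda)\) is a limit of pairs that become tail-equivalent only at ever later levels. So I will read the topology of Definition~\ref{def:labelled_Brat_Groupoid} as the one generated by the double cylinder sets. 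This topology refines the product topology, because a product neighbourhood \(Z(p',V)\times Z(q',V')\) of \((x^\lambda,y^\lambda)\) contains a double cylinder set around \((x^\lambda,y^\lambda)\) at a sufficiently deep level. Hence Hausdorffness is inherited from \(\mathcal{P}^\infty_{\mathcal{B}_\lambda}\times\mathcal{P}^\infty_{\mathcal{B}_\lambda}\).

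\textbf{Step 1: the double cylinder sets form a base.} For covering, take \((x^\lambda,y^\lambda)\) with \(x^\lambda_{[n,\infty)}=y^\lambda_{[n,\infty)}\) and let \(p,q\) be the underlying edge paths of \(x^\lambda_{[1,n]},y^\lambda_{[1,n]}\). Then \((x^\lambda,y^\lambda)\in Z(p,q,s(e_n))\).

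For intersections, suppose \((x^\lambda,y^\lambda)\in Z(p_{[1,n]},q_{[1,n]},U)\cap Z(p'_{[1,m]},q'_{[1,m]},V)\) with \(n\le m\). The tails agree from level \(n\), hence from level \(m\). As in the proof of Proposition~\ref{AH:prop:cylinder-sets-basis}, put
\[W:=V\cap(\lambda(e_n)\circ\cdots\circ\lambda(e_{m-1}))^{-1}(U).\]
Then \(W\) is open because the eigenfunctions are continuous, and \((x^\lambda,y^\lambda)\in Z(p',q',W)\), which is contained in both double cylinder sets.

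\textbf{Step 2: \(r\) and \(s\) are local homeomorphisms.} On \(Z(p,q,U)\) the range map \(r(x^\lambda,y^\lambda)=x^\lambda\) is a bijection onto the cylinder set \(Z(p,U)\). Its inverse sends \(x^\lambda\) to \(\bigl(x^\lambda,\,q\,x^\lambda_{[n+1,\infty)}\bigr)\), using the concatenation of an unlabelled path with a labelled path defined above; this concatenation is admissible because \(p\) and \(q\) end at the same vertex and the labelled point at level \(n\) is shared.

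\begin{itemize}
\item The map \(r|_{Z(p,q,U)}\) is open, since double cylinder sets \(Z(p',q',W)\) inside \(Z(p,q,U)\) are carried onto cylinder sets \(Z(p',W)\).
\item It is continuous, since \(r^{-1}(Z(p',V))\cap Z(p,q,U)\) equals \(Z(p',q',W)\) at the deeper level, with \(q'\) the corresponding extension of \(q\) and \(W\) as in Step~1.
\end{itemize}

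The argument for \(s\) is symmetric. In particular \(r\) and \(s\) are continuous and open globally.

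\textbf{Step 3: groupoid operations.} Inversion maps \(Z(p,q,U)\) onto \(Z(q,p,U)\), so it is a homeomorphism. For multiplication, refine any two composable basic sets to a common level \(n\), as in Step~1. One then checks
\[Z(p,q,U)\,Z(q,t,V)=Z(p,t,U\cap V),\]
and that every basic neighbourhood of a product \((x^\lambda,z^\lambda)\) contains such a product of basic neighbourhoods of the two factors. This gives continuity of multiplication on \(\mathcal{G}^{(2)}_{\mathcal{B}_\lambda}\).

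\textbf{Main obstacle.} The real difficulty is local compactness and the bookkeeping in Step~2. The delicate points are that the inverse of \(r|_{Z(p,q,U)}\) lands in \(\mathcal{P}^\infty_{\mathcal{B}_\lambda}\), so that the labelled points propagate consistently through the eigenfunctions, and that the level-\(n\) coordinates of \(x^\lambda\) and \(y^\lambda\) are matched correctly. For local compactness, note that \(\mathcal{P}^\infty_{\mathcal{B}_\lambda}\) is a closed subspace of the compact space \(\prod_i E_i\times(\text{finitely many compact spaces } X_{n,j})\), so it is compact. Each \(Z(p,q,\overline{U'})\) with \(\overline{U'}\subseteq U\) is then homeomorphic via \(r\) to a closed subset of this compact space. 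Thus every point of \(\mathcal{G}_{\mathcal{B}_\lambda}\) has a compact neighbourhood, and \(\mathcal{G}_{\mathcal{B}_\lambda}\) is a locally compact Hausdorff étale groupoid.
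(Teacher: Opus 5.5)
Your proposal is correct, and it departs from the paper exactly at the point you flag. The paper keeps the relative product topology of Definition~\ref{def:labelled_Brat_Groupoid}. It then argues that double cylinder sets form a base for that topology: every point lies in a double cylinder contained in a product cylinder, and by \eqref{eq:basis_double_cyl} each set $(Z(p_{[1,n]},U)\times Z(q_{[1,m]},V))\cap\mathcal{G}_{\mathcal{B}_{\lambda}}$ is a union of double cylinders. That only shows the double-cylinder topology is finer than the relative product topology. The paper never shows double cylinders are relatively open, and in general they are not. For instance, put one-point labels on the CAR diagram and let $y_k$ differ from $x$ only in the $k$-th edge. Then $(x,y_k)\to(x,x)$ in the product topology, so the unit space is not relatively open and the groupoid with the stated topology is not \'etale. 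Your reading is therefore the right one. It is also the topology the paper actually uses later: in Theorem~\ref{thm:groupoid_as_labeled_Brat}, $\Phi$ is shown to match double cylinders with the basic bisections $(Z_{p,q}\times\pi_n^{-1}(U))\cap(\mathcal{G}_{\mathcal B}\ltimes\widehat C)$.

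The two choices trade off as follows.
\begin{itemize}
\item The paper's choice makes continuity of inversion and multiplication automatic. This is why its proof stops after the base computation plus a one-line remark that $r$ is open and injective on double cylinders.
\item Your choice makes the \'etale structure genuine. The cost is that you must verify by hand the continuity of the operations, the continuity of $r|_{Z(p,q,U)}$, Hausdorffness and local compactness, which your Steps~1--3 and final paragraph do. You obtain Hausdorffness, correctly, from the paper's computation \eqref{eq:basis_double_cyl}.
\end{itemize}

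Three small remarks.
\begin{itemize}
\item Under the paper's convention $x^\lambda_{[n,\infty)}=y^\lambda_{[n,\infty)}$, a nonempty $Z(p_{[1,n]},q_{[1,n]},U)$ forces $p_n=q_n$. This is what makes your inverse $x^\lambda\mapsto(x^\lambda,q\,x^\lambda_{[n+1,\infty)})$ land in $Z(q_{[1,n]},U)$.
\item The containment $Z(p',q',W)\subseteq Z(p,q,U)$ in Step~1 also uses that $p'$ and $q'$ agree on levels $n,\dots,m$. This holds because $x^\lambda$ and $y^\lambda$ share edges from level $n$, and it ensures that pairs in $Z(p',q',W)$ share their points at levels $n,\dots,m-1$ as well.
\item Local compactness is simpler than your closure argument. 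For $p_{[1,n]}=(e_i)_{i=1}^n$, the cylinder $Z(p_{[1,n]},s(e_n))$ is clopen in the compact space $\mathcal{P}^\infty_{\mathcal{B}_\lambda}$, so each $Z(p_{[1,n]},q_{[1,n]},s(e_n))$ is already a compact open bisection.
\end{itemize}
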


\begin{proof}
Let us first show that the double cylinder sets form a basis for the topology on $\mathcal{G}_{\mathcal{B}_{\lambda}}.$
Fix a pair \newline$(x^{\lambda},y^{\lambda})=((e_i,t_i)_{i=1}^{\infty},(f_i,s_i)_{i=1}^{\infty})\in \mathcal{G}_{\mathcal{B}_{\lambda}}$ and let \(n\in\Nz\) be large enough so that $x^{\lambda}_{[n,\infty)}=y^{\lambda}_{[n,\infty)}$. Then $$(x^{\lambda},y^{\lambda})\in Z((e_i)_{i=1}^{n},(f_i)_{i=1}^{n},s(e_n)) \subset (Z((e_i)_{i=1}^{n}, s(e_n))\times Z((f_i)_{i=1}^{n}, s(e_n))) \cap  \mathcal{G}_{\mathcal{B}_{\lambda}}$$ and for general finite paths $p_{[1,n]}$ and $q_{[1,m]}$ with $m\ge n$ we have
\begin{equation}\label{eq:basis_double_cyl}(Z(p_{[1,n]}, U)\times Z(q_{[1,m]}, V)) \cap  \mathcal{G}_{\mathcal{B}_{\lambda}}=\underset{k\ge m,\;\; a_{[n,k]}, \;\;b_{[m,k]}}\bigcup Z(p_{[1,n]}a_{[n+1,k]},q_{[1,m]}b_{[m+1,k]}, W_{k,a_{[n+1,k]}, b_{[m+1,k]}})\end{equation} 
where $a_{[n+1,k]}$, $b_{[m+1,k]}$ are any paths such that the concatenations $p_{[1,n]}a_{[n+1,k]},q_{[1,m]}b_{[m+1,k]}$ are finite paths terminating at the same vertex.
The sets \(W_{k,a_{[n+1,k]},b_{[m+1,k]}}\) are defined in the following way: letting $\lambda_a$ denote the (right to left) composition of eigenfunctions labelling the path $a_{[n+1,k]}$ and $\lambda_b$ denote the (right to left) composition of eigenfunctions labelling the path $b_{[m+1,k]}$, we define $W_{k,a_{[n+1,k]}, b_{[m+1,k]}}:=\lambda_a^{-1}(U)\cap \lambda_b^{-1}(V)$, which is necessarily open. Hence we have shown that the double cylinder sets indeed form a basis. 

Let us now show that $\mathcal{G}_{\mathcal{B}_{\lambda}}$ is \'etale. The range map is open and injective on the double cylinder set $Z(p_{[1,n]},q_{[1,n]},U)$, mapping it to the basic open set $Z(p_{[1,n]},U)$ in the unit space. Hence the range map is a local homeomorphism and so $\mathcal{G}_{\mathcal{B}_{\lambda}}$ is \'etale. 
\end{proof}

The next step is to show that $\mathcal{G}_{\mathcal{B}_{\lambda}}$ is a groupoid model for a diagonal AH-algebra described by a diagonal AH-system. 
The aim is to show that $\mathcal{G}_{\mathcal{B}_{\lambda}}$ is isomorphic to the transformation groupoid $\mathcal{G}_{\mathcal B}\ltimes \widehat C$ and invoke Theorem \ref{thm: Af action on spectrum}. In order to do this we first show how we can naturally identify $\widehat{C}$ with $\mathcal{P}_{\mathcal{B}_{\lambda}}^{\infty}.$ 

We recall that by \cite[Theorem~3.6]{BL17} we have $\widehat{C}=\varprojlim(\widehat{C_n},\dot{p_n})$ endowed with the inverse limit topology. 
We can write $\widehat{C_n}=\bigsqcup\limits_{i=1}^{N_n}\widehat{C_{n,i}}$ and by the construction given on page 19 of \cite{Li18} we can in fact treat $\dot{p_n}$ as a disjoint union $\bigsqcup\limits_{i,j, y\in Y(n,(i,j))}p_y$ where by \cite[Section~3~\&~Lemma~3.3]{Raa23} each $p_y$ is in fact one of the corresponding eigenfunctions from $X_{n+1,j}$ to $X_{n,i}$. 
Recall from \eqref{eq:canonCartInBlocks} that the diagonals \(C_n\) are of the form
\[C_n=\bigoplus_{i=1}^{N_n}C_{n,i}, \quad C_{n,i}=C(X_{n,i})\otimes D_{k_{n,i}}.\]
We can identify each $\chi\in\widehat{C}$ with a sequence $(e_n, \chi_n)_{n=1}^{\infty}$ where each $\chi_n$ belongs to $X_{n,i_n}$ and \(e_n\in \widehat D_{k_{n,i_n}}\) for some $i_n\in\{1,2,\ldots,N_n\}$, and where the bonding maps are the eigenfunctions. 
Hence if $e_n$ is the edge in $\mathcal{B}_{\lambda}$ connecting the nodes $X_{n,i_n}$ with $X_{n+1,i_{n+1}}$ the associated eigenfunction $\lambda(e_n)$ will satisfy $\lambda(e_n)(\chi_{n+1})=\chi_n.$ 

\begin{lemma}\label{lem:spectrumC_path_space}
    In the setting above, define $h:\widehat{C}\rightarrow\mathcal{P}_{\mathcal{B}_{\lambda}}^{\infty}$ by $h(\chi)=(e_n,\chi_n)_{n=1}^{\infty}$. Then $h$ is an isomorphism of topological spaces. 
\end{lemma}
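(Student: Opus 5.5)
The plan is to work throughout with the description $\widehat{C}=\varprojlim(\widehat{C_n},\dot{p_n})$ from \cite[Theorem~3.6]{BL17}, equipped with the inverse limit topology. We use the decomposition $\widehat{C_n}=\bigsqcup_i \widehat{C_{n,i}}$, where $\widehat{C_{n,i}}\cong X_{n,i}\times\widehat{D_{k_{n,i}}}$ is a disjoint union of $k_{n,i}$ copies of $X_{n,i}$. By the construction of \cite{Li18} and \cite[Lemma~3.3]{Raa23}, the bonding map $\dot{p_n}$ acts on each copy as an eigenfunction $p_y=\lambda(e_y)$. The copies of $X_{n,i}$ in $\widehat{C_n}$ are indexed by the rank-one diagonal projections of $M_{k_{n,i}}$, and hence by the paths from the root to $X_{n,i}$. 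Under this indexing, the copy corresponding to a path $p_{[1,n]}$ is mapped by $\dot{p_n}$ from a copy indexed by an extension $p_{[1,n]}e$, via the eigenfunction $\lambda(e)$.

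\textbf{Well-definedness.} I will first check that $h$ is well defined. Given $\chi=(\chi^{(n)})_n$ in the inverse limit, each $\chi^{(n)}$ lies in a copy of some $X_{n,i_n}$ indexed by a path $p^{(n)}$, and compatibility forces $p^{(n+1)}$ to extend $p^{(n)}$ by a single edge $e_n$. The $X$-coordinates $\chi_n$ then satisfy $\lambda(e_n)(\chi_{n+1})=\chi_n$, so $(e_n,\chi_n)_n$ is a labelled path starting at the root, that is, an element of $\mathcal{P}^\infty_{\mathcal{B}_\lambda}$. The indexing conventions need care here, since the paper's labelled paths attach $t_i\in s(e_i)$; I will fix the shift so that the conditions match the definition exactly.

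\textbf{Bijectivity.} Injectivity is immediate, because a point of the inverse limit is determined by its coordinates, and these are recovered from the edges (which give the copy) together with the points $\chi_n$. For surjectivity, I will run the same dictionary backwards. A labelled path $(e_n,t_n)_n$ determines, at each level, the copy indexed by $e_1\cdots e_n$ and the point $t_n$ in it, and the condition $\lambda(e_n)(t_{n+1})=t_n$ is exactly compatibility with $\dot{p_n}$.

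\textbf{Homeomorphism.} Both spaces are compact Hausdorff: $\widehat{C}$ is an inverse limit of compact spaces. For $\mathcal{P}^\infty_{\mathcal{B}_\lambda}$, Hausdorffness is easy via cylinder sets, since distinct paths differ in an edge or in a point, and the latter can be separated by open sets at that level. Compactness then follows from $h$ being a continuous bijection once continuity of $h^{-1}$ is known, or one shows it directly. To avoid circularity, I will show that $h$ maps basic open sets onto basic open sets. The inverse limit topology has a base of sets $\pi_n^{-1}(W)$ with $W$ open in $\widehat{C_n}$, and it suffices to take $W$ open inside a single copy of $X_{n,i}$ indexed by a path $p_{[1,n]}$. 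Such a set maps exactly onto the cylinder set $Z(p_{[1,n]},W)$ of Definition~\ref{AH:def:cylinder-set}. Conversely, every cylinder set is the image of such a set. Since by Proposition~\ref{AH:prop:cylinder-sets-basis} the cylinder sets form a base, $h$ is open and continuous, and hence a homeomorphism.

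The main obstacle I expect is the bookkeeping identification in the first step. One must make precise that the copies of $X_{n,i}$ in $\widehat{C_n}$ correspond bijectively to root paths, compatibly with how $\dot{p_n}$ permutes copies, and this depends on extracting from \cite{Li18} and \cite{Raa23} the ordering of eigenfunctions in the diagonal blocks. I will handle it by induction on $n$, using that each unital diagonal map sends the $r$-th diagonal projection of $M_{k_{n,j}}$ to a sum of diagonal projections, one in each block corresponding to an edge out of $X_{n,j}$.
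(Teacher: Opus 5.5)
Your proposal is correct and follows essentially the same route as the paper. You identify points of $\varprojlim(\widehat{C_n},\dot{p_n})$ with labelled paths via the eigenfunction bonding maps, check bijectivity directly, and then match the basic open sets $\pi_n^{-1}(W)$ of the inverse limit with (finite unions of) cylinder sets $Z(p,W)$ in both directions. Your write-up is in fact more careful than the paper's on two points: you index the copies of $X_{n,i}$ in $\widehat{C_n}$ by root paths, and you flag the off-by-one shift between $\chi_n\in X_{n,i_n}$ and the convention $t_i\in s(e_i)$ (the paper instead matches $\pi_n^{-1}(U)$ for $U\subseteq X_{n,i}$ with the union of the $Z(p_a,U)$ over all root paths $p_a$ ending at $X_{n,i}$). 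Your compactness remarks in the last step are unnecessary, since the base-to-base correspondence under the bijection $h$ already gives the homeomorphism.
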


\begin{proof}
    That $h(\chi)\in \mathcal{P}_{\mathcal{B}_{\lambda}}^{\infty}$ follows immediately from the construction above. Assume $h(\chi)=(e_n,\chi_n)_{n=1}^{\infty}=h(\chi^{\prime})=(e^{\prime}_n,\chi^{\prime}_n)_{n=1}^{\infty}$. Therefore $(\chi_n)_{n=1}^{\infty}=(\chi^{\prime}_n)_{n=1}^{\infty}$ with the same bonding maps and so they define the same element in the inverse limit. Hence $\chi=\chi^{\prime}$ and $h$ is injective. It is trivial to see that $h$ is surjective. 

    Now we show that $h$ preserves the topology. Let $U\subset X_{n,i}$ be an open set and consider the basic open set $\pi_n^{-1}(U)\subset \widehat{C}$, where $\pi_n$ is the canonical projection from the inverse limit onto the $n^{\text{th}}$ coordinate. Let $\{p_a\}_{a\in A}$ denote the finite collection of all unlabelled paths which start at the node corresponding to $\{0\}$ in $\mathcal{B}_{\lambda}$ and terminate on the node corresponding to $X_{n,i}$ in $\mathcal{B}_{\lambda}$. Then $h$ maps $\pi_n^{-1}(U)$ precisely onto the union $$\bigcup\limits_{a\in A}Z(p_{a}, U)$$ which is open in $\mathcal{P}_{\mathcal{B}_{\lambda}}^{\infty}$.
    
    Conversely, let $Z(p_{[1,n]},U)$ be a basic open set in $\mathcal{P}_{\mathcal{B}_{\lambda}}^{\infty}$. Assume $p_{[1,n]}=e_1e_2\ldots e_n$ where if viewed inside $\mathcal{B}_{\lambda}$ we have $s(e_k)=X_{k,i_k}.$ Then $h^{-1}$ maps $Z(p_{[1,n]},U)$ precisely onto the finite intersection $\left(\bigcap\limits_{k=1}^{n-1}\pi_k^{-1}(X_{k,i_k})\right)\cap \pi_n^{-1}(U)$ which is open in the inverse limit topology.
\end{proof}

We write $H:C(\mathcal{P}_{\mathcal{B}_{\lambda}}^{\infty})\rightarrow C$ for the C$^*$-algebra isomorphism  which is dual to the homeomorphism $h$ in Lemma \ref{lem:spectrumC_path_space}.  

\begin{lemma}\label{lem:dual_maps}
    Let $\rho:\widehat{C}\rightarrow \mathcal{P}_{\mathcal{B}}^{\infty}$ be the anchor map of the action \eqref{eq:AFActionOnDiag} in Proposition \ref{prop:AFGrpdActOnAHSpace}. Define $G:\mathcal{P}_{\mathcal{B}_{\lambda}}^{\infty}\rightarrow\mathcal{P}_{\mathcal{B}}^{\infty}$ as the map sending $x^{\lambda}=(e_i,t_i)_{i=1}^{\infty}$ to $x=(e_i)_{i=1}^{\infty}.$ Then $\rho=G\circ h.$
\end{lemma}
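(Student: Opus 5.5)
Since both \(\rho\) and \(G\circ h\) are continuous maps into the compact Hausdorff space \(\mathcal{P}^\infty_{\mathcal B}\), and the indicator functions \(1_{Z_t}\) of cylinder sets \(Z_t\) (for finite root paths \(t\)) separate points of \(\mathcal{P}^\infty_{\mathcal B}\), I will show that for every \(\chi\in\widehat C\) and every such \(t\)
\[
\rho(\chi)\in Z_t \iff (G\circ h)(\chi)\in Z_t .
\]
By definition of \(\rho\) as the Gelfand dual of the inclusion \(C\cap B=C(\mathcal{P}^\infty_{\mathcal B})\subseteq C\), the left-hand side is equivalent to \(\chi(1_{Z_t})=1\). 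The task is therefore to identify the image of \(1_{Z_t}\) inside \(C=\varinjlim C_n\) concretely.

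First I fix a finite path \(t=e_1\cdots e_n\) from the root to a vertex \(X_{n,i}\). Under the identification of \(B_n\) with \(\bigoplus_v M_{n_v}\) built from the Bratteli diagram, \(1_{Z_t}=1_{Z_{t,t}}\) is the diagonal matrix unit in \(B_{n,i}=1\otimes M_{k_{n,i}}\) indexed by the path \(t\). Viewed in \(C_n\), it is the projection \(1_{X_{n,i}}\otimes e_{tt}\in C(X_{n,i})\otimes D_{k_{n,i}}\). A character of \(C_n\) is a pair consisting of a point of some \(X_{n,i'}\) and a diagonal index of \(D_{k_{n,i'}}\), that is, a root path ending at \(X_{n,i'}\). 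Such a character takes the value \(1\) on this projection exactly when \(i'=i\) and the diagonal index is \(t\), and the value \(0\) otherwise.

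Next I compare this with the identification \(\chi\leftrightarrow (e_m,\chi_m)_m\) used to define \(h\). The restriction of \(\chi\) to \(C_n\) is the \(n\)-th coordinate \(\pi_n(\chi)\in\widehat{C_n}\) of the inverse limit. The bookkeeping of the diagonal indices under the diagonal connecting maps (from \cite[Lemma~3.3]{Raa23} and the construction in \cite{Li18}) shows that the diagonal index of \(\pi_n(\chi)\) is exactly the path \(e_1\cdots e_n\) of edges recorded by \(h(\chi)\). This is because the block \(M_{k_{n,i}}\) sits in \(M_{k_{n+1,j}}\) at the diagonal position indexed by the edge \(e_n\), so the index at level \(n+1\) is the index at level \(n\) extended by \(e_n\). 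Consequently \(\chi(1_{Z_t})=1\) if and only if \(e_1\cdots e_n=t\), which says precisely that \(G(h(\chi))\in Z_t\).

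Since this holds for all \(t\), the two points \(\rho(\chi)\) and \(G(h(\chi))\) lie in exactly the same basic clopen sets, so they agree and \(\rho=G\circ h\). The main obstacle is the middle step: checking carefully that the diagonal-index labelling of characters in \(\widehat{C_n}\) coincides with the path labelling of \(D_{k_{n,i}}\) coming from the Bratteli diagram convention, compatibly with the connecting maps. I would handle this by induction on \(n\), using that a unital diagonal map sends \(e_{tt}\otimes 1\) to the sum \(\sum_{e}e_{te,te}\) over outgoing edges \(e\).
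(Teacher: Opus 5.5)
Your proof is correct and is essentially the paper's argument. The paper shows that the canonical inclusion \(C(\mathcal P^\infty_{\mathcal B})\hookrightarrow C\) equals \(H\circ i\) by checking both on the indicator functions \(1_{Z_p}\), then dualises; this is the same computation \(\chi(1_{Z_t})=1_{Z_t}(G(h(\chi)))\) that you carry out pointwise, together with the fact that cylinder sets separate points (one small indexing slip: with your convention that \(e_1\cdots e_n\) ends at \(X_{n,i}\), the edge taking level \(n\) to level \(n+1\) is \(e_{n+1}\), not \(e_n\)).
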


\begin{proof}
    Note that there is a canonical inclusion of C$^*$-algebras $i: C(\mathcal{P}_{\mathcal{B}}^{\infty})\hookrightarrow C(\mathcal{P}_{\mathcal{B}_{\lambda}}^{\infty})$ defined as follows: if $p=e_1e_2\ldots e_n$ and $s(e_n)=X_{n,i}$ (when the edges are viewed in $\mathcal{B}_{\lambda}$) then $i(1_{Z_p})=1_{Z(p,X_{n,i})}$. It is clear that the Gelfand dual to $i$ is the map $G$. 
    
    Let $\chi=(\chi_k)_{k=1}^{\infty}\in \widehat{C}$ such that $h(\chi)=(f_i,\chi_i)_{i=1}^{\infty}$ Then $$H(i(1_{Z_p}))(\chi)=H(1_{Z(p,X_{n,i})})(\chi)=1_{Z(p,X_{n,i})}(h(\chi))=1_{Z(p,X_{n,i})}((f_i,\chi_i)_{i=1}^{\infty})=\delta_{p,(f_i)_{i=1}^n}.$$ Now the canonical inclusion of $C(\mathcal{P}_{\mathcal{B}}^{\infty})$ inside $C$ considered in Proposition \ref{prop:AFGrpdActOnAHSpace} (and whose dual map is $\rho$) satisfies $1_{Z_p}(\chi)\neq 0$ if and only if $\chi_k\in X_{k,s(e_k)}$ for $1\le k\le n$, which in turn is true if and only if $p=(f_i)_{i=1}^n$. 
    Hence this canonical inclusion is equal to $H\circ i$. 
    By uniqueness of the Gelfand dual map, it follows that the dual $\rho$ must equal the dual to $H\circ i$, which is $G\circ h$ and the proof is complete.
\end{proof}

For a labelled path \(x^\lambda\in\mathcal P^\infty_{\mathcal B_\lambda}\) we write \(x:=G(x^\lambda)\in\mathcal P^\lambda_{\mathcal B}\) for the unlabelled component (here \(G\) is the map described by Lemma~\ref{lem:dual_maps}).

\begin{lemma}\label{lem:h_inverse_actioned}
    Let $x^{\lambda}=(e_i,t_i)_{i=1}^{\infty}$ and $y^{\lambda}=(f_i,s_i)_{i=1}^{\infty}$ such that $x^{\lambda}_{[n,\infty)}=y^{\lambda}_{[n,\infty)}$.
    Let $h$ be the homeomorphism in Lemma \ref{lem:spectrumC_path_space}.
    Under the action \eqref{eq:AFActionOnDiag} we have \begin{equation}\label{eq:h_inverse_action}(x,y)\cdot h^{-1}(y^{\lambda})=h^{-1}(x^{\lambda}).\end{equation}
\end{lemma}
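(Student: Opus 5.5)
Set \(\chi:=h^{-1}(y^{\lambda})\) and \(\psi:=(x,y)\cdot\chi\). The goal is to show \(h(\psi)=x^{\lambda}\); since \(h\) is a bijection by Lemma~\ref{lem:spectrumC_path_space}, this gives \eqref{eq:h_inverse_action}.

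First check that the action is defined. By Lemma~\ref{lem:dual_maps}, \(\rho(\chi)=G(h(\chi))=G(y^\lambda)=y\). The pair \((x,y)\) lies in \(\mathcal{G}_{\mathcal B}\) because tail-equivalence of the labelled paths implies tail-equivalence of their unlabelled parts. Put \(p:=(e_i)_{i=1}^{n}\) and \(q:=(f_i)_{i=1}^{n}\). These have the same length, and they end at the same vertex because \(e_{n}=f_{n}\) (indeed \(x^\lambda_n=y^\lambda_n\)). Moreover \(x_{[n+1,\infty)}=y_{[n+1,\infty)}\), so \((x,y)\in Z_{p,q}\). Hence, by Proposition~\ref{prop:AFGrpdActOnAHSpace}, \(\psi(c)=\chi(1_{Z_{p,q}}^*c1_{Z_{p,q}})\) for all \(c\in C\).

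Next, compare the level-\(m\) coordinates of \(h(\psi)\) and \(x^\lambda\) for every \(m\ge n\). A character of \(C=\varinjlim C_m\) is the compatible sequence of its restrictions to the blocks \(C_m\). At level \(m\ge n\), the image of \(1_{Z_{p,q}}\) in \(A_m\) is a sum of matrix units \(\epsilon_{p a, q a}\) over the unlabelled continuations \(a\) from the common vertex \(r(e_n)\) to level \(m\). These are constant in the space variable, because the connecting maps only amplify constant matrix units. Now take \(c=g\otimes \epsilon_{u,u}\in C_{m,i}\) with \(g\in C(X_{m,i})\). Conjugating by \(1_{Z_{p,q}}\) gives \(g\otimes\epsilon_{v,v}\), where \(v\) is obtained from \(u\) by replacing the prefix \(p\) with \(q\); if \(u\) does not begin with \(p\), the result is \(0\). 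Write \(\chi\) at level \(m\) as the point \(\chi_m=s_m\in X_{m,\cdot}\) together with the diagonal position given by the path \(y_{[1,m]}\). Then \(\psi\) at level \(m\) is the same point \(s_m=t_m\) at the diagonal position \(x_{[1,m]}\). So for \(m\ge n\) the level-\(m\) coordinate of \(h(\psi)\) is \((e_m,t_m)\).

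Finally, recover the coordinates for \(m<n\). Both \(h(\psi)\) and \(x^\lambda\) are labelled paths, so their earlier coordinates are forced by the rule \(t_{m}=\lambda(e_{m})(t_{m+1})\). This is the concatenation of the unlabelled path \(p\) with \(x^\lambda_{[n+1,\infty)}\). Alternatively, it follows because \(\rho(\psi)=x\) (Proposition~\ref{prop:AFGrpdActOnAHSpace}) fixes the edges, and the points are then determined by the bonding maps of the inverse limit. Hence \(h(\psi)=x^\lambda\).

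The main obstacle is the middle step. It requires tracking exactly how the identification of \(\widehat{C}\) as an inverse limit in Lemma~\ref{lem:spectrumC_path_space} encodes the diagonal position \(e_n\in\widehat{D}_{k_{n,i_n}}\) as a path in \(\mathcal B\). With that encoding, one must check that conjugation by the amplified matrix unit changes only the path label and leaves the space coordinate alone. I would make this precise by indexing the diagonal positions of \(M_{k_{m,i}}\) by root-to-vertex paths. Under this indexing, the diagonal connecting map \(\mathrm{diag}(f\circ\lambda_1,\dots)\) sends the position indexed by \(u\) in block \(j\) to the positions \(ue\), one for each edge \(e\), with point pulled back by \(\lambda(e)\).
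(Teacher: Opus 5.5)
Your proof is correct. It rests on the same underlying fact as the paper's: conjugation by the constant matrix unit $1_{Z_{p,q}}$ moves the diagonal position from prefix $q$ to prefix $p$ and leaves the point of $X_{m,i}$ unchanged. The mechanism, however, is different.

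The paper sets $w^\lambda:=h((x,y)\cdot h^{-1}(y^\lambda))$ and fixes an open $U\ni t_n$ with pullbacks $U_k$. It asserts $1_{Z_{p,q}}^*\,1_{Z(x_{[1,k]},U_k)}\,1_{Z_{p,q}}=1_{Z(y_{[1,k]},U_k)}$ and concludes that $w^\lambda\in Z(x_{[1,k]},U_k)$ for all $k\ge n$ and all such $U$. Hence $w=x$ and $r_n=t_n$. You instead compute the restriction of $\psi$ to each building-block diagonal $C_m$ exactly, evaluating on elementary tensors $g\otimes\epsilon_{u,u}$ against the explicit amplification $\sum_a 1\otimes\epsilon_{pa,qa}$. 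This actually proves the conjugation identity that the paper only states.

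Your route has two further advantages.
\begin{itemize}
\item You test against arbitrary continuous $g$. The paper's indicator functions $1_{Z(x_{[1,k]},U_k)}$ lie in $C\cong C(\mathcal{P}^\infty_{\mathcal{B}_\lambda})$ only when the cylinder is clopen. This fails, for instance, for $X=[0,1]$ with identity eigenfunctions and $U$ an open interval, where $Z(p,U)=Z_p\times U$. The paper's argument therefore needs an Urysohn function in place of the indicator.
\item You identify every level $m\ge n$ at once. The paper's closing step passes from $r_n=t_n$ (and $w=x$) to $r_i=t_i$ for all $i$. For $i>n$ this does not follow when eigenfunctions are non-injective (e.g.\ the constant maps in Villadsen or Goodearl systems). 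It really requires rerunning the argument at each $m\ge n$, which is legitimate because the hypothesis persists for larger indices.
\end{itemize}

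The cost of your approach is that you must fix an indexing of diagonal positions by root-to-vertex paths that is compatible with the connecting maps. With the paper's convention $t_m\in s(e_m)$, the position accompanying the point $s_m$ is the path ending at $s(f_m)$. That path is one edge shorter than $y_{[1,m]}$. This only shifts the levels at which your middle step applies, and your final step (recovering lower coordinates via the bonding maps) absorbs the shift.
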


\begin{proof}
    First note that the left hand side of \eqref{eq:h_inverse_action} is well defined since by Lemma \ref{lem:dual_maps} it follows that $\rho(h^{-1}(y^{\lambda}))=F(y^{\lambda})=s((F(x),F(y)))$. 
    Hence there exists a unique $w^{\lambda}=(g_i,r_i)_{i=1}^{\infty}$ such that $(x,y)\cdot h^{-1}(y^{\lambda})=h^{-1}(w^{\lambda}).$ We aim to show that $g_i=e_i$ and $r_i=t_i$ for all $i\in\mathbb{N}$ which will imply that $w^{\lambda}=x^{\lambda}.$

    Let $U\subset s(e_n)$ be any open set containing $t_n$. 
    Let $U_k=(\lambda(e_n)\circ \lambda(e_{n+1})\circ\cdots \circ \lambda({e_{k-1}}))^{-1}(U)$ for $k\ge n$. 
    For each $k\ge n$, consider the continuous function $c_k=1_{Z(x_{[1,k]},U_k)}\in C(\mathcal{P}_{\mathcal{B}_{\lambda}}^{\infty})\cong C$. 
    Then it follows that for all $k\ge n$ we have
    \begin{equation*}
        \begin{split}
            (x,y)\cdot h^{-1}(y^{\lambda})(c_k)&= h^{-1}(y^{\lambda})(1_{Z_{x_{[1,n]},y_{[1,n]}}}^*c_k1_{Z_{x_{[1,n]},y_{[1,n]}}}) \\
            &= h^{-1}(y^{\lambda})(1_{Z(y_{[1,k]},U_k)})\\
            &= 1_{Z(y_{[1,k]},U_k)}(y^{\lambda})\\&= 1,
        \end{split}
    \end{equation*}
    where the last equality follows because $s_k=t_k$ for all $k\ge n$. 
    Hence it must follow that $h^{-1}(w^{\lambda})(c_k)=1$ for all $k\ge n$. 
    This means that $1_{Z(x_{[1,k]},U_k)}(w^{ \lambda})=1$ for all $k\ge n$, which means the initial \([1,k]\)-segments of the unlabelled paths \(w\) and \(x\) coincide for all $k\ge n$.
    But this simply means $w=x$. 
    The above computation also shows $r_n\in U$, and since \(U\) is an arbitrary open neighbourhood of \(t_n\), we see $r_n=t_n$.
    This forces that $r_i=t_i$ for all $i\in \mathbb{N}$, and hence $w^{\lambda}=x^{\lambda}$ as desired. 
\end{proof}

\begin{theorem}\label{thm:groupoid_as_labeled_Brat}
    Let \((A_n,\phi_n)_{n\in\Nz}\) be a diagonal AH-system and let $A$ be the resulting AH-algebra.
   
    The function $\Phi:\mathcal{G}_{\mathcal{B}_{\lambda}}\rightarrow \mathcal{G}_{\mathcal B}\ltimes \widehat C$ defined by
    \begin{equation}\label{eq:explicit_iso_brat_to_transformation}\Phi(x^{\lambda},y^{\lambda})=((x,y),h^{-1}(y^{\lambda}))\end{equation}
    is an isomorphism of topological groupoids. In particular $\mathcal{G}_{\mathcal{B}_{\lambda}}$ is a groupoid model for $A$.

    \begin{proof}
   By Lemma \ref{lem:h_inverse_actioned} we have that $\Phi$ is well-defined. Let us show that $\Phi$ is a groupoid homomorphism. Let $x^{\lambda}, y^{\lambda}, z^{\lambda}\in\mathcal{P}_{\mathcal{B}_{\mathcal{\lambda}}}^{\infty}$ such that $x^{\lambda}_{[n,\infty)}=y^{\lambda}_{[n,\infty)}=z^{\lambda}_{[n,\infty)}$. 
   Lemma \ref{lem:h_inverse_actioned} implies that \(\Phi\) preserves composable pairs, and we obtain 
   $$\Phi((x^{\lambda}, y^{\lambda}),(y^{\lambda}, z^{\lambda}))=\Phi((x^{\lambda}, z^{\lambda}))=((x,z),h^{-1}(z^{\lambda}))=((x,y),h^{-1}(y^{\lambda}))((y,z),h^{-1}(z^{\lambda}))=\Phi((x^{\lambda}, y^{\lambda}))\Phi((y^{\lambda}, z^{\lambda})),$$
   so $\Phi$ is a groupoid homomorphism.

    Let us now show that $\Phi$ is bijective.
    Given $(x,y)\in \mathcal{G}_{\mathcal{B}}$ with $x=(e_i)_{i=1}^{\infty}, y=(f_i)_{i=1}^{\infty}$ and $e_i=f_i$ for $i\ge n$, and given $\chi\in \widehat{C}$ such that $\rho(\chi)=y$, it follows from Lemma \ref{lem:dual_maps} that $G\circ h (\chi)=y$ and so $h(\chi)=(f_i,s_i)_{i=1}^{\infty}.$ Define $x^{\lambda}=(e_i,t_i)_{i=1}^{\infty}$ where $t_i=s_i$ for $i\ge n$ and where $t_{n-k}=\lambda(e_{n-k})(t_{n-k+1})$ for $k=1,\ldots n-1$. Let $y^{\lambda}=h(\chi).$ Then $(x^{\lambda},y^{\lambda})\in\mathcal{G}_{\mathcal{B}_{\lambda}}$ and $\Phi((x^{\lambda},y^{\lambda}))=((x,y),\chi)$ and so $\Phi$ is surjective. 
    Since $h$ is a homeomorphism it follows immediately that $\Phi$ is injective. 
    Hence $\Phi$ is bijective. 
    
    It remains to show that $\Phi$ preserves the topological structure. It is clear that $\Phi$ identifies basic open double cylinder sets of the form $Z(p_{[1,n]},q_{[1,n]},U)$ of $\mathcal{G}_{\mathcal{B}_{\lambda}}$ with basic open sets $(Z_{p,q}\times \pi_n^{-1}(U))\cap (\mathcal{G}_{\mathcal{B}}\ltimes \widehat{C})$. Hence $\Phi$ is an isomorphism of topological groupoids. 
    \end{proof}
\end{theorem}

\begin{corollary}
    Let $A$ be a diagonal AH-algebra with Weyl groupoid $\mathcal{G}_A$. 
    Then $$A\cong C^*_{\mathrm r}(\mathcal{G}_A)\cong C^*_{\mathrm r}( \mathcal{G}_{\mathcal B}\ltimes \widehat C)\cong C^*_{\mathrm r}(\mathcal{G}_{\mathcal{B}_{\lambda}}).$$
\end{corollary}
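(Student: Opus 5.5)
The corollary follows by chaining three isomorphisms that are already available in the paper. I will establish them from left to right and then compose them.

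\emph{First isomorphism, $A\cong C^*_{\mathrm r}(\mathcal{G}_A)$.} The inductive limit $C$ of the building block diagonals is a C$^*$-diagonal, hence a Cartan subalgebra, of $A$; this is the fact recorded after Theorem~\ref{thm: Renaults theorem}, via \cite[Proposition~5.4]{Li18} and \cite[Theorem~3.6]{BL17}. Renault's theorem (Theorem~\ref{thm: Renaults theorem}) then gives $A\cong C^*_{\mathrm r}(\mathcal{G}_A,\Sigma)$, where $(\mathcal{G}_A,\Sigma)$ is the Weyl twist. The one point to address is that the twist is trivial. For this I use the remark preceding Section~3: the building block twists for $C_n\subseteq A_n$ are trivial, so the limit twist is trivial as well. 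Hence $C^*_{\mathrm r}(\mathcal{G}_A,\Sigma)=C^*_{\mathrm r}(\mathcal{G}_A)$.

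\emph{Second isomorphism, $C^*_{\mathrm r}(\mathcal{G}_A)\cong C^*_{\mathrm r}(\mathcal{G}_{\mathcal B}\ltimes\widehat C)$.} Theorem~\ref{thm: Af action on spectrum} provides an isomorphism of topological groupoids $F\colon\mathcal{G}_{\mathcal B}\ltimes\widehat C\to\mathcal{G}_A$. An isomorphism of locally compact Hausdorff étale groupoids induces an isomorphism of reduced C$^*$-algebras: $f\mapsto f\circ F^{-1}$ maps $C_c$ to $C_c$, respects convolution and involution, and intertwines the regular representations. It therefore extends to the reduced completions, so this step is formal.

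\emph{Third isomorphism, $C^*_{\mathrm r}(\mathcal{G}_{\mathcal B}\ltimes\widehat C)\cong C^*_{\mathrm r}(\mathcal{G}_{\mathcal B_\lambda})$.} Theorem~\ref{thm:groupoid_as_labeled_Brat} gives an isomorphism of topological groupoids $\Phi\colon\mathcal{G}_{\mathcal B_\lambda}\to\mathcal{G}_{\mathcal B}\ltimes\widehat C$. The same functoriality argument as in the second step then yields the isomorphism of reduced C$^*$-algebras.

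\emph{Main obstacle.} The only step needing care is the triviality of the twist in the first isomorphism. If a more self-contained justification is wanted, I would argue as follows. Theorem~\ref{thm: Af action on spectrum} shows that $\mathcal{G}_A$ is covered by the bisections coming from the normalisers $1_{Z_{p,q}}$. The map $[\alpha_{1_{Z_{p,q}}},\chi]\mapsto$ (class of $1_{Z_{p,q}}$ at $\chi$) is a continuous section of $\Sigma\to\mathcal{G}_A$. It is multiplicative because $1_{Z_{t,p}}1_{Z_{p,q}}=1_{Z_{t,q}}$, and a continuous multiplicative section trivialises the twist.
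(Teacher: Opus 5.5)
Your proposal is correct and follows the paper's route. The paper states this corollary without a separate proof because it is exactly the chain you describe: Theorem~\ref{thm: Renaults theorem} together with the trivial-twist remark in Section~2, then Theorem~\ref{thm: Af action on spectrum}, then Theorem~\ref{thm:groupoid_as_labeled_Brat}, with each isomorphism of topological groupoids inducing an isomorphism of reduced C$^*$-algebras. Your optional argument that the twist is trivial, via a continuous multiplicative section built from the $1_{Z_{p,q}}$, is a sound extra that the paper does not need.
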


\section{Examples}\label{sec:Examples}

We now use the machinery developed to describe the groupoid models for some examples. These examples are known to have groupoid models (due to having a Cartan subalgebra, see \cite[Corollary~4.4]{LR22}).

\subsection{Villadsen algebras of the first kind (\cite{Vil98})}

For $i\in\mathbb{N}$, select $n_i,k_i\in\mathbb{N}$ and set $A_i=C(X^{n_i})\otimes M_{k_i}$, where $X$ is a compact Hausdorff space. Assume that $k_i|k_{i+1}$ and that the sequence $\{k_i\}$ has no upper bound. We also assume that $n_i|n_{i+1}$. We can write $X^{n_{i+1}}=(X^{n_i})^{m_i}$ where $n_im_i=n_{i+1}.$ The Villadsen algebra of the first kind is $A=\varinjlim(A_i,\phi_i)$, where the connecting maps $\phi_i$ are defined as follows: $$\phi_i(f)=\mathrm{diag}(f\circ \pi^i_1,f\circ\pi^i_2,\ldots,f\circ \pi^i_{p_i},f\circ c^i_1,f\circ c^i_2,\ldots, f\circ c^i_{q_i}),$$  and where $$\pi^i_s: X^{n_{i+1}}\rightarrow X^{n_i}$$ is some canonical projection onto a coordinate component for $s=1,\ldots, p_i$, and where $$c^i_r: X^{n_{i+1}}\rightarrow X^{n_i}$$ is a constant function for $r=1,\ldots, q_i$. We allow $q_i$ to be $0$ (meaning that connecting maps may not have constant eigenfunctions). By construction we have $p_i+q_i=\frac{k_{i+1}}{k_i}$, which is the multiplicity of the embedding, so in particular the connecting maps are unital. The connecting maps are injective since \(p_i\geq 1\). Examples of Villadsen algebras of the first kind include the following:

\begin{itemize}
    \item \textbf{Toms' examples of non-classifiable C$^*$-algebras} \cite{Toms}. Let $X=[0,1]$ and let $n_i=6N_i$ where $N_i=\prod\limits_{j\le i}\overline{n_j}$, and where $\overline{n_1}=1.$ It is also assumed that $k_1=4$. Let $p_i=\overline{n_i}$ and assume $p_i>>q_i$ as $i\to \infty$, and that for each $r\in\mathbb{N}$ there exists $i_0$ such that $r\vert\frac{k_{i_0+1}}{k_{i_0}}$.
    \item \textbf{Goodearl Algebras} \cite{Goo}. Let $n_i=1$ for all $i$ (so $\pi^i_s$ are all identity maps) and assume $q_i\ge 1$. Assume further that the union of the images of all the constant maps $c^i_r$ is dense in $X$ (implying $A$ is simple, see \cite[Example~3.1.7]{Ror}). 
\end{itemize}

The AF-subsystem for $A$ will give rise to a Bratteli diagram $\mathcal{B}=(V,E,r,s)$ where $V_i$ consists of only one element, and where $E_i$ consists of $\frac{k_{i+1}}{k_i}$ elements. The labelled Bratteli diagram $\mathcal{B}_{\lambda}=(V,E,r,s,\lambda)$ will have $V_i=\{X^{n_i}\}$ and $\lambda$ labels the edges on $E_i$ by the appropriate eigenfunctions corresponding to it; either a canonical projection or a constant map. Let $C$ be the associated canonical C$^*$-diagonal of $A$ and let $\widehat{C}$ be its spectrum. 
Let $e=e_1e_2\ldots\in \mathcal{P}^{\infty}_{\mathcal{B}}$ be an infinite path starting at the first vertex. The fibre space $\varprojlim(X^{n_i},\lambda(e_i))$ corresponding to the path \(e\) can be described explicitly depending on various cases.

\underline{Case 1.} The sequence $\{n_i\}_i$ has a strictly increasing subsequence.

\underline{Case 1a.} The sequence $\{\lambda(e_i)\}_{i}$ contains infinitely many constant maps. Then it is clear that $\varprojlim(X^{n_i},\lambda(e_i))\cong \{*\}.$

\underline{Case 1b.} The sequence $\{\lambda(e_i)\}_{i}$ is eventually only projection maps. In this case $\varprojlim(X^{n_i},\lambda(e_i))\cong \prod\limits_{i}X^{n_i}\cong \prod_{\Nz}X =: X^{\infty}$

In Case 1, $\widehat{C}$ is a space fibred over $\mathcal{P}_{\mathcal{B}}^{\infty}$, where each fibre is either $\{*\}$ or $X^{\infty}.$ 

\underline{Case 2.} There exists $i_0$ such that $n_{i+1}=n_i$ for all $i\ge i_0$.

\underline{Case 2a.} The sequence $\{\lambda(e_i)\}_{i}$ contains infinitely many constant maps. Then it is clear that $\varprojlim(X^{n_i},\lambda(e_i))\cong \{*\}.$

\underline{Case 2b.} The sequence $\{\lambda(e_i)\}_{i}$ is eventually only projection maps. It is clear then that these must be the identity and so $\varprojlim(X^{n_i},\lambda(e_i))\cong X^{n_I}$ where $n_I$ is the largest element of $\{n_i\}$ such that $n_I\neq n_{I-1}.$

Hence in Case 2 $\widehat{C}$ is a space fibred over $\mathcal{P}_{\mathcal{B}}^{\infty}$, where each fibre is either $\{*\}$ or $X^{n_I}.$

As a set we consider $\widehat{C}$ as a bundle over \(\mathcal P^\infty_{\mathcal B}\). In order to topologise it we import the inverse limit topology from $\widehat{C}$. By Theorem \ref{thm: Af action on spectrum} we obtain the following:

\begin{proposition}\label{prop: example VillFirst}
    The groupoid model for a general Villadsen algebra of the first kind $A$ is given by $$\mathcal{G}_{\mathcal{B}}\ltimes E_{\mathcal{P}_{\mathcal{B}}^{\infty}}$$ where $E$ is fibred over $\mathcal{P}_{\mathcal{B}}^{\infty}$, the space of infinite paths on the associated Bratteli diagram $\mathcal{B}$. If $X$ is the underlying compact Hausdorff space in the inductive system defining \(A\), then each fibre of \(E\) is of the form $\{*\}, X^{n}$, or $X^{\infty}$ (depending on the Villadsen algebra in question).
\end{proposition}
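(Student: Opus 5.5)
The plan is to apply Theorem~\ref{thm: Af action on spectrum} directly: it gives $A\cong C^*_{\mathrm r}(\mathcal{G}_{\mathcal B}\ltimes\widehat C)$, with $\mathcal{G}_{\mathcal B}\ltimes\widehat C$ the Weyl groupoid of $C\subseteq A$. So the statement reduces to describing $\widehat C$ as a space $E$ fibred over $\mathcal{P}^\infty_{\mathcal B}$ with the stated fibres. For the fibration we use the anchor map $\rho\colon\widehat C\to\mathcal{P}^\infty_{\mathcal B}$ of Proposition~\ref{prop:AFGrpdActOnAHSpace}. By Lemmas~\ref{lem:spectrumC_path_space} and~\ref{lem:dual_maps}, $\rho=G\circ h$, where $h$ identifies $\widehat C$ with the labelled path space $\mathcal{P}^\infty_{\mathcal B_\lambda}$. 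Hence the fibre $\rho^{-1}(e)$ over $e=e_1e_2\cdots$ is the set of labelled paths $(e_i,t_i)_i$ with $t_i\in X^{n_i}$ and $\lambda(e_i)(t_{i+1})=t_i$, which is exactly $\varprojlim(X^{n_i},\lambda(e_i))$. We set $E:=\widehat C$ with the inverse limit topology, regarded as a bundle via $\rho$. The action is then the one from Proposition~\ref{prop:AFGrpdActOnAHSpace}, and by Lemma~\ref{lem:h_inverse_actioned} it moves a labelled path by changing its initial unlabelled segment.

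It remains to identify each fibre, following the case split above. Since $V_i$ is a single vertex, every path $e$ is an arbitrary sequence of edges, and each edge is labelled either by a coordinate projection $\pi^i_s$ or by a constant $c^i_r$.

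\emph{Infinitely many constant labels.} Suppose $\lambda(e_j)$ is constant for infinitely many $j$. For each such $j$ the coordinate $t_j$ is forced to be the constant value, regardless of $t_{j+1}$. Each earlier coordinate $t_i$ with $i<j$ is then determined by applying $\lambda(e_i)\circ\cdots\circ\lambda(e_{j-1})$ to $t_j$, so every coordinate is determined. Existence of a compatible thread follows by choosing the values at the constant indices and pushing them down consistently; this works because constants at different levels do not interact, since each is imposed from above. So the fibre is a single point.

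\emph{Eventually only projections.} Suppose all $\lambda(e_i)$ for $i\ge i_0$ are coordinate projections $X^{n_{i+1}}=(X^{n_i})^{m_i}\to X^{n_i}$. Then the limit $\varprojlim_{i\ge i_0}(X^{n_i},\pi_i)$ is a limit of surjective product projections, each discarding $m_i-1$ blocks of factors. A compatible thread is therefore the same as a choice of one coordinate in each newly added block. This identifies the limit with $X^{n_{i_0}}\times\prod_{i\ge i_0}(X^{n_i})^{m_i-1}$, which is homeomorphic to $X^{\infty}$ when $n_i\to\infty$ (Case 1). In Case 2 we have $m_i=1$ eventually, so the projections are identities and the limit is $X^{n_I}$. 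The coordinates below $i_0$ are determined by the thread, so the full fibre is homeomorphic to this limit.

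The main obstacle is the second case: writing down the homeomorphism of the inverse limit of projections with a countable product in a way that is explicit and correct for the product topology. In particular one has to track which coordinate block each $\pi^i_s$ keeps, and handle the case where $n_i$ increases only along a subsequence. I would check this carefully at the level of basic open sets, using that $\varprojlim$ of compact spaces with surjective bonding maps is compact Hausdorff, and that a continuous bijection between compact Hausdorff spaces is a homeomorphism.
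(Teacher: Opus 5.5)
Your proposal is correct and follows essentially the paper's route. You invoke Theorem~\ref{thm: Af action on spectrum}, view $\widehat C$ (with its inverse limit topology) as fibred over $\mathcal{P}^\infty_{\mathcal B}$, which you make precise via $\rho=G\circ h$, and compute each fibre $\varprojlim(X^{n_i},\lambda(e_i))$ by the same case split: infinitely many constant labels versus eventually only projections, and whether $n_i$ stabilises. Your explicit description $X^{n_{i_0}}\times\prod_{i\ge i_0}(X^{n_i})^{m_i-1}$ of the eventually-projection limit is a more accurate intermediate step than the paper's $\prod_i X^{n_i}$; both are countable products of copies of $X$, so both are homeomorphic to $X^\infty$.
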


\subsection{AH-models of dynamical systems}

Let \(X\) be a compact Hausdorff space and let \(\sigma\colon X\to X\) be a homeomorphism.
The \emph{AH-model} for the dynamical system \((X,\sigma)\) is the AH-system
\[C(X)\xrightarrow{\phi_0} C(X, M_2)\xrightarrow{\phi_1}\cdots\to C(X,M_{2^n})\xrightarrow{\phi_n} C(X,M_{2^{n+1}})\to\cdots\]
where the connecting maps are given by
\[\phi_n\colon f\mapsto \begin{pmatrix}
    f&0\\
    0&f\circ\sigma
\end{pmatrix}.\]
By identifying \(C(X,M_k)\) with \(M_k\otimes C(X)\) in the canonical way we see that the connecting maps are indeed diagonal.

Write \(A_n:=C(X,M_{2^n})\) and let \(C_n\subseteq A_n\) be the diagonal in each building block; the algebra of functions \(f\in A_n\) such that \(f(x)\) is diagonal for each \(x\in X\).
Let \(A:=\varinjlim_n A_n\) and \(C:=\varinjlim_n C_n\) be the resulting AHDM-algebra and its canonical diagonal respectively.
Each \(C_n\) has spectrum \(\widehat {C_n}\cong\{0,1\}^n\times X\) and the dual maps \(\hat\phi_n\colon \widehat{C_{n+1}}\to\widehat{C_n}\) are given by
\begin{align*}
    \hat\phi_n((t_1,\dots,t_{n+1}),x)&=\begin{cases}
    ((t_1,\dots,t_n),x),& t_{n+1}=0,\\
    ((t_1,\dots,t_n),\sigma(x)),& t_{n+1}=1,
\end{cases}\\
&=((t_1,\dots,t_n),\sigma^{t_{n+1}}(x))
\end{align*}
for \((t_1,\dots,t_{n+1})\in\{0,1\}^{n+1}\) and \(x\in X\).
Since \(\hat\phi_n((t_1,\dots,t_{n+1}),x)=\hat\phi_n((s_1,\dots,s_{n+1}),y)\) implies \((t_1,\dots,t_n)=(s_1,\dots,s_n)\), a generic element of the projective limit \(\varprojlim_n\{0,1\}^n\times X\) can be expressed in the form
\begin{equation}\label{eq:AHModelDynamSeq}
    (t_{[1,n]},x_n)_n
\end{equation}
where \(t\in\{0,1\}^\Nz\) is a sequence, \(t_{[1,n]}\) is the initial segment of length \(n\) of \(t\), and \((x_n)_n\) is an appropriate sequence in \(x\).
Since \(\sigma\) is a homeomorphism, we see that the sequence \((x_n)_n\) in \eqref{eq:AHModelDynamSeq} is uniquely determined by the sequence \(t\in\{0,1\}^\Nz\) and point \(x_1\in X\) since \(x_n=\sigma^{t_{n+1}}(x_{n+1})\) inductively determines the other values \(x_n\).
Moreover, any choice of starting point \(x_1\in X\) and sequence \(t\in \{0,1\}^\Nz\) gives rise to a unique element of the projective limit in this way, so the map \(b\colon \varprojlim_n\{0,1\}^n\times X\to \{0,1\}^\Nz\times X\) given by \(b((t_1,\dots,t_n),x_n)_n=\left(t,x\right)\) is a bijection (where \(t\in \{0,1\}^\Nz\) is the sequence with initial segments \((t_1,\dots, t_n)\)).
The inverse of \(b\) is given by
\[b^{-1}(t,x)=\left(t_{[1,n]},\sigma^{-\sum_{i=2}^n t_i}(x)\right)_n.\]

\begin{proposition}\label{prop: exampleAHDynam}
    The map \(b\) is a homeomorphism \(\{0,1\}^\Nz\times X\cong\varprojlim_n \{0,1\}^n\times X\).
    The canonical groupoid model for \(A\) is isomorphic to \(G_{2^\infty}\ltimes (\{0,1\}^\Nz\times X)\), where \(G_{2^\infty}\) is the canonical groupoid model for the CAR algebra \(M_{2^\infty}\).
    \begin{proof}
        The topology on \(\varprojlim_n\{0,1\}^n\times X\) is the coarsest topology such that the projections \(\pi_k\colon \varprojlim_n\{0,1\}^n\times X\to\{0,1\}^k\times X\) given by
        \[\pi_k((t_1,\dots,t_n),x_n)_n)=((t_1,\dots,t_k),x_k)\]
        are continuous.
        Precomposing with the bijection \(b\) yields maps
        \[\pi_k^\sigma:=\pi_k\circ b^{-1}\colon (t,x)\mapsto (t_{[1,k]},\sigma^{-\sum_{i=2}^kt_i}(x_1)).\]
        Fix \(p\in\{0,1\}^k\) and \(U\subseteq X\).
        The preimage of \(\{p\}\times U\) under \(\pi_k^\sigma\) consists exactly of pairs \((t,x)\) with \(t_{[1,k]}=p\) and \(\sigma^{-\sum_{i=2}^kp_i}(x)\in U\), so this preimage is given by
        \[Z_p\times \sigma^{\sum_{i=2}^kp_i}(U)\]
        where \(Z_p\subseteq \{0,1\}^\Nz\) is the basic open set of sequences with initial segment \(p\).
        Since \(\sigma\) is a homeomorphism, the set \(Z_p\times \sigma^{\sum_{i=2}^kp_i}(U)\) is open in the product topology.
        Conversely, if \(Z_p\times V\) is a basic open set in the product topology then it is realised as the preimage under \(\pi_k^\sigma\) of \(\{p\}\times\sigma^{-\sum_{i=2}^k p_i}(V)\).
        Hence \(b\) is a homeomorphism and \(\hat D\cong\{0,1\}^\Nz\times X\).
        
        Theorem~\ref{thm: Af action on spectrum} then implies that the groupoid model for \(A\) is isomorphic to the transformation groupoid \(G_{2^\infty}\ltimes (\{0,1\}^\Nz\times X)\), proving the final claim.
    \end{proof}
\end{proposition}

\begin{remark}
    Under the homeomorphism \(b\), the canonical projection \(\pi_k\colon\varprojlim_n\{0,1\}^n\times X\to \{0,1\}^k\times X\) is mapped to the `\(\sigma\)-twisted' projection \(\pi_k^\sigma\colon (t,x)\mapsto (t_{[1,k]},\sigma^{-\sum_{i=2}^k}(x))\) as opposed to the `obvious' projection \((t,x)\mapsto (t_{[1,k]},x)\).
    Hence, while the spaces are homeomorphic, the universal cone under \(\{0,1\}^\Nz\times X\) for the projective system \(\widehat{D_n}\xleftarrow{\hat\phi_n}\widehat{D_{n+1}}\) must incorporate the homeomorphism \(\sigma\) in order to encode the relevant universal property for the system. 
\end{remark}

\end{document}